\documentclass[11pt]{article}
\usepackage{amsmath,amsthm,amssymb,amsfonts,amsbsy,mathtools,enumerate}
\usepackage{tikz}
\usepackage{float}
\usepackage{graphicx}
\usepackage{blkarray}
\usepackage{caption, subcaption}
\usepackage{nicematrix}
\usepackage[toc,page]{appendix}
\usepackage[hidelinks]{hyperref}
\usepackage{url}
\usepackage{tikz-cd}
\usepackage{mathtools}
\newtheorem*{theorem*}{Theorem}
\theoremstyle{definition}
\newtheorem{theorem}{Theorem}[section]
\newtheorem{corollary}[theorem]{Corollary}

\newtheorem{lemma}[theorem]{Lemma}
\newtheorem{prop}[theorem]{Proposition}

\newtheorem{defn}[theorem]{Definition}
\newtheorem{rk}{Remark}

\newcommand\sqr[2]{{\vbox{\hrule height.#2pt
    \hbox{\vrule width.#2pt height#1pt \kern#1pt
        \vrule width.#2pt}\hrule height.#2pt}}}
\newcommand\qedbox{%
	\ifmmode\eqno\sqr53
	\else\nolinebreak\ \hfill\sqr53\medbreak\fi}

\newcommand\cl[1]{\text{cl}(#1)}
\newcommand\ul[1]{L_{#1}^{\uparrow}}
\newcommand\dl[1]{L_{#1}^{\downarrow}}

\def\N{{\mathbb N}}

\def\T{{\mathcal{T}}}
\newcommand\re{{\mathbb R}}%reals

\newcommand\rankg[2]{\text{rank}_{\mathbb{#1}}(#2)}

\DeclareMathOperator\im{im}

\usepackage[backend=biber,style=alphabetic,maxbibnames=99,maxalphanames=4]{biblatex}
\title{Triangle Families with Large Edge Up-Laplacian Spectral Gap} 
\author{Mutasim Mim\footnote{Department of Mathematics, CUNY Graduate Center, CUNY, NY, USA. \texttt{Email: mutasim.mim71@gc.cuny.edu}}}
\date{}
\begin{document}
\maketitle

\begin{abstract}
    Let $\mathcal{T}$ be a finite nonempty set of $3$-element subsets of a totally ordered set $V$. We view $\mathcal{T}$ as the set of triangles in the support graph. Let $\delta_{1,\mathcal{T}}$ be the signed edge-triangle incidence matrix, and $\lambda(\mathcal{T})$ the spectral gap of $\delta_{1,\T}^T\delta_{1,\T}.$ We study extremal problems involving $\lambda(\mathcal{T})$ in terms of $|\mathcal{T}|.$

    Our main results show that large $\lambda(\mathcal{T})$ forces strong overlap and a large minimum degree in the support graph $\Gamma_0(\mathcal{T}).$ In particular, every support edge lies in at least $\lceil \lambda(\mathcal{T})\rceil-2$ triangles in $\mathcal{T}$ and hence $\Gamma_0(\mathcal{T})$ has minimum degree at least $\lceil \lambda(\mathcal{T})\rceil-1$.
    
    We use this to derive extremal and rigidity consequences. We prove that $\binom{n}{3}$ is the exact threshold for attaining level $n:$ if $|\mathcal{T}|< \binom{n}{3}$, then $\lambda(\mathcal{T}) \leq n-1,$ while if $|\mathcal{T}|=\binom{n}{3}$ and $\lambda(\mathcal{T}) > n-1,$ then $\mathcal{T}$ is exactly the full set of triangles on an $n$-vertex clique. We further show that this clique peak is isolated in a strong interval-scale sense: letting $\phi(t)=\max_{|\T|=t} \lambda(\T)$, immediately above $\binom{n}{3}$ there is a forbidden interval on which $\phi(t) \leq n-1$, and the first passage above the level $n-1$ is delayed by $\Theta(n^2)$ additional triangles. Since $ \binom{n+1}{3} - \binom{n}{3}=\Theta(n^2),$ this implies that after the peak at $\binom{n}{3}$ one must traverse a nonzero proportion of the full gap until the next clique threshold before substantial recovery can occur. In particular, $\phi$ is not monotone. On the other hand, despite this nonmonotone behavior, $\phi(t)=\Theta(t^{\frac{1}{3}}).$
    
    Finally, if $\Lambda(t):=\max_{1 \leq s \leq t}\phi(s),$ then $\Lambda(t)=\max\{n \in \N:\binom{n}{3} \leq t\}.$ Thus complete triple systems are the unique minimal spectral extremizers, but their peaks are isolated on the natural scale between consecutive clique thresholds.
\end{abstract}

\noindent \textbf{Mathematics Subject Classification.}
Primary 05C50; Secondary 05C35, 15A18.

\noindent \textbf{Keywords:} Combinatorial Laplacian; edge up-Laplacian; smallest positive eigenvalue; triangle families; spectral graph theory; clique rigidity; extremal problem.

\section{Introduction}
The spectrum of the edge up-Laplacian $L_1^{\uparrow}$ of graphs can encode structural information about the graph that is not seen by the adjacency or the usual graph Laplacian spectrum. Even among highly regular graphs, the $L_1^{\uparrow}$ spectra can often distinguish nonisomorphic graphs that are indistinguishable by their adjacency and graph Laplacian spectra. In \cite{cioaba-guo-mim-ji}, the authors observed that a family of $3854$ strongly regular graphs with parameters $(38,18,9,9)$ are all distinguished by their $L_1^{\uparrow}$ spectra, even though these graphs have the same strongly regular graph parameters, and hence the same adjacency and graph Laplacian spectra. At the same time, \cite{cioaba-guo-mim-ji} also mentions nonisomorphic graphs that are cospectral with respect to higher dimensional Laplacian spectra. The $L_1^{\uparrow}$ spectrum is not complete invariants. Nevertheless, the computations in \cite{cioaba-guo-mim-ji} show that these spectra encode substantial structural information about the graph not visible to the adjacency and the Laplacian spectra.

This leads to a basic structural question: what structural and combinatorial graph information is encoded by the $L_1^{\uparrow}$ spectrum? Compared to the adjacency and graph Laplacian spectra, the higher dimensional Laplacian spectra are less extensively computed and their structural implications are less well understood. The present paper studies this question from an extremal point of view. Instead of studying the entire spectrum, we focus on the gap between $0$ and the nonzero eigenvalue that is closest to $0$, or equivalently, the smallest positive eigenvalue.

Let $\T$ be a finite nonempty family of $3$-element sets. After fixing an ordering of the underlying point set $V=\bigcup_{T \in \T}T$, one obtains a signed pair-triple incidence matrix $\delta_{1,\T}$; the rows are indexed by the members of $\T,$ the columns are indexed by two element subsets of members of $\T,$ and the entries are the usual boundary signs (see Section \ref{sec:notations-and-background} for definitions). In this paper we study the smallest positive eigenvalue of the corresponding signed incidence Gram matrix:
\begin{gather*}
    \lambda(\T):=\lambda_{\min}^+\left(\delta_{1,\T}^T\delta_{1,\T}\right),
\end{gather*}
as a spectral-combinatorial parameter of the set system $\T$. When $\T$ is the set of all triangles in a graph $G$, then $L_1^{\uparrow}$ is exactly $\delta_{1,\T}^T\delta_{1,\T}$, with possibly additional zero rows and columns indexed by edges not contained in any triangles. The matrix $\delta_{1,\T}^T\delta_{1,\T}$ is positive semidefinite, so, $\lambda(\T)$ is its spectral gap. Changing the total order on $V$ results in a conjugation of $\delta_{1,\T}^T\delta_{1,\T}$ by an orthogonal matrix, hence, $\lambda(\T)$ is independent of the chosen total order on $V.$ Equivalently, if the triples in $\T$ are viewed as triangles of a graph with vertex set $V$, then $\delta_{1,\T}^T\delta_{1,\T}$ is the restricted edge up-Laplacian of the triangle family. Although the definition is spectral, the main results of the paper are extremal-combinatorial: large values of $\lambda(\T)$ force strong local overlap, complete triangle families give exact clique thresholds, and the exact budget function is nonmonotone between consecutive clique sizes.

Our main point is that $\lambda(\T)$ has a strong structural meaning. Let $\Gamma_0(\T)$ denote the support graph of $\T,$ namely the graph formed by all vertices and edges contained in at least one triangle of $\T.$ We prove that when $\lambda(\T)$ is large, every edge of $\Gamma_0(\T)$ lies in many triangles in $\T,$ and consequently, $\Gamma_0(\T)$ has large minimum degree. In particular, every support edge lies in at least $\lceil \lambda(\T) \rceil-2$ triangles and every vertex of $\Gamma_0(\T)$ has degree at least $\lceil \lambda(\T) \rceil-1.$ This is the first theme of the paper.

Our second theme is extremal. We view $\lambda$ as the edge level analogue of the classical algebraic connectivity of a graph. An important family of problems in extremal spectral graph theory is maximizing the algebraic connectivity given the number of vertices and edges in a graph. Motivated by this problem, we study the edge-level analogous problem: given a budget of triangles, how large can the smallest positive eigenvalue of $\delta_{1,\T}^T\delta_{1,\T}$ be? For each positive integer $t,$ define:
\begin{gather*}
    \phi(t) = \max_{|\T|=t} \lambda(\T), \qquad \Lambda(t) = \max_{1 \leq s \leq t} \phi(s).
\end{gather*}
Thus $\phi$ and $\Lambda$ play the role of extremal algebraic connectivity functions in the edge-level setting. The natural benchmark is the clique threshold $t=\binom{n}{3}.$ If $\T$ is the full triangle set of $K_n$, then $\lambda(\T)=n.$ We show that this is the best possible: if $|\T|<\binom{n}{3},$ then $\lambda(\T)\leq n-1,$ while if $|\T|=\binom{n}{3}$ and $\lambda(\T) > n-1,$ then $\T$ must in fact be the full triangle set on an $n$-clique. Thus, $\binom{n}{3}$ is the exact threshold for reaching level $n$. 

We then show that the behavior immediately above this threshold is already nontrivial, and in fact, persists on the natural scale of consecutive clique thresholds. In particular, there is an explicit forbidden interval immediately after $\binom{n}{3}$ on which $\phi(t) \leq n-1.$ So although $K_n$ reaches level $n$, this level does not persist even for the next admissible triangle counts. More strongly, the first passage above level $n-1$ after $\binom{n}{3}$ is delayed by $\Theta(n^2)=\Theta\left(\binom{n+1}{3}-\binom{n}{3}\right)$ additional triangles. This shows that the clique counts $\binom{n}{3}$ are the correct horizontal scale for the extremal problem. This gap phenomenon shows that the clique construction alone does not describe the behavior immediately above the threshold. In particular,
\begin{gather*}
    \phi\left(\binom{n}{3}\right)=n, \quad \phi\left(\binom{n}{3}+1\right) \leq n-1, \quad \phi\left(\binom{n+1}{3}\right)=n+1.
\end{gather*}
At the global scale, we determine the exact behavior of the monotone envelope $\Lambda.$ More precisely, we prove that 
\begin{gather*}
    \Lambda(t)=\max \left\{n \in \N: \binom{n}{3} \leq t\right\}.
\end{gather*}
In particular,
\begin{gather*}
    \Lambda(t) = \Theta(t^{\frac{1}{3}}).
\end{gather*}
Thus the largest possible value of $\lambda(\T)$ grows only on the cubic-root scale in the number of triangles, and in fact the monotone envelope $\Lambda$ admits an exact staircase description. By contrast, the exact-budget extremal function is highly nonmonotone. Nevertheless, we prove that:
\begin{gather*}
    \phi(t) = \Theta(t^{\frac{1}{3}}).
\end{gather*}
In particular, $\phi(t) \to \infty$ as $t \to \infty.$

\paragraph{Local consequences and contrast with algebraic connectivity} 
% One useful way to interpret $\lambda$ is as an edge level analogue of the algebraic connectivity of a graph. 
Let $G$ be a graph, $L_0$ the usual graph Laplacian of $G$, and $\mu_2$ the second smallest eigenvalue of $L_0.$ If $G$ is connected, then $\mu_2$ is the smallest positive eigenvalue of $L_0.$ Denote by $\kappa(G)$ the vertex connectivity of $G$ and by $d_{\min}(G)$ the minimum degree of $G$. It is well-known that (see Section 1.7 in \cite{brouwer-haemers}) for a connected, non-complete graph $G$,
\begin{gather*}
    \mu_2 \leq \kappa(G) \leq d_{\min}(G).
\end{gather*}
Let $\T$ be a nonempty set of $3$-element subsets and $\Gamma$ the support graph of $\T.$ For each edge $e$ of $\Gamma,$ denote by $d_e(\T)$ the number of triangles in $\T$ containing $e.$ One of our first results regarding $\lambda$ shows that:
\begin{gather*}
    \lceil \lambda(\T) \rceil-2 \leq\, \min\{d_e(\T) : e \in E(\Gamma)\}, \quad \lceil \lambda(\T) \rceil-1 \leq d_{\min}(\Gamma),
\end{gather*}
and both inequalities are sharp for complete graphs on at least $3$ vertices. There is also a basic contrast to algebraic connectivity: the algebraic connectivity is nondecreasing under edge additions. However, extending $\T$ by adding a triangle can both increase or decrease $\lambda(\T).$ This phenomenon can be observed on $4$ vertices: for
\begin{gather*}
    \T_1=\{\{1,2,3\}\}, \quad
    \T_2=\{\{1,2,3\}, \{1,2,4\}\},\quad
    \T_3=\{\{1,2,3\},\{1,2,4\},\{1,3,4\}\},\\
    \T_4=\{\{1,2,3\},\{1,2,4\},\{2,3,4\},\{1,3,4\}\},
\end{gather*}
we have respectively
\begin{gather*}
    \lambda(\T_1)=3,\quad \lambda(\T_2)=2, \quad \lambda(\T_3)=1, \quad \lambda(\T_4)=4. 
\end{gather*}

\paragraph{Laplacian and incidence matrix viewpoints.} The linear algebraic operators used in this paper are familiar from two standard settings, but here they are used in a different extremal role. Higher-dimensional Laplacians, originating in Eckmann's discrete Hodge theory \cite{eckmann} and appearing in later combinatorial formulations such as Duval--Reiner \cite{duval-reiner}, Horak--Jost \cite{horak-jost}, and Lim \cite{lim}, are usually studied as invariants of a fixed simplicial complex. In that setting, one asks how spectral gaps control expansion, isoperimetry, mixing, or homological vanishing; see, for example, Cheeger-type inequalities and related vanishing results for simplicial complexes \cite{jost-zhang, steenbergen}, as well as Lew's sharp lower bounds under missing-face minimal-degree hypotheses \cite{lew-missing-face}. There is also a parallel linear-algebraic tradition in extremal set theory and design theory, where one studies ranks, $\bmod\, p$ ranks, singular values, or spectra of incidence and inclusion matrices of set systems \cite{frankl-intersection, keevash, grosu-et-al}. Here, the signed edge-triangle incidence matrix is used to define the spectral parameter itself: we vary the triple family $\T$ and study how large $\lambda(\T)$ can be, and what combinatorial properties are forced by a large value of $\lambda(\T).$ The parameter $\lambda$ also arises naturally from the total edge Hodge Laplacian:
\begin{gather*}
    L_{1,\T} = \delta_0 \delta_0^T + \delta_{1,\T}^T\delta_{1,\T}.
\end{gather*}
A basic identity shows that 
\begin{gather*}
    \lambda_{\min}^+(L_1) = \min \big\{\lambda_{\min}^+(L), \lambda(\T)\big\},
\end{gather*}
where $L$ is the usual graph Laplacian. Thus, once the graph Laplacian is not the bottleneck, the problem of controlling the smallest positive eigenvalue of the total edge Laplacian reduces exactly to controlling $\lambda(\T).$ This gives a direct motivation for treating $\lambda(\T)$ as an object in its own right.

Taken together, our results show that $\lambda(\T)$ is a meaningful structural parameter for triangle families: large values force overlap and a support graph of large minimum degree, clique thresholds are rigid, and there is a genuine forbidden region immediately above each clique threshold, while the associated monotone envelope admits an exact global description. More broadly, this paper illustrates that higher-dimensional Laplacian eigenvalues can encode concrete combinatorial constraints, much as classical spectral parameters do in graph theory.

The paper is organized as follows. Section \ref{sec:notations-and-background} introduces notation and records the linear-algebraic facts used later. In Section \ref{sec:extremality-rigidity-compression}, we present the main combinatorial results. Subsection \ref{subsec:large-minimum-degree-subgraphs-triangle-overlaps} proves the main structural consequences of large $\lambda(\T)$. Subsection \ref{subsec:rigidity-extremal-constructions} establishes the exact clique-threshold rigidity and the exact formula for $\Lambda.$ Subsection~\ref{subsec:compression-rigidity-above-clique-threshold} proves the forbidden interval and nonmonotonicity results. In subsection \ref{subsec:spectrum-glued-cliques}, we give an explicit description of the clique $2$-down Laplacian of the graph $K_c \vee \overline{K_b}$ constructed by joining $b$ disjoint vertices to each vertex in a copy of $K_c.$ In subsection \ref{subsec:growth-rate-phi}, we use this explicit value along with a Frobenius-type construction to derive the exact growth rate of $\phi.$ Section \ref{sec:conclusion} concludes with some further questions about the finer structure of extremal and near-extremal triangle families.

\section{Notations and Background}
\label{sec:notations-and-background}
In this section, we introduce notations and terminology used throughout the paper, and recall some linear algebraic facts that will be useful to us in later sections. A graph $G=(V,E)$ refers to a finite, simple, undirected graph, with an arbitrary but fixed total order on $V$. For an edge $e=\{x,y\}$ and a triangle $T=\{a,b,c\}$ in $G$, we define the sign of $e$ in $T,$ denoted $[T:e]$ as follows:
\begin{gather*}
    [T:e] :=  \begin{cases}
        0 & \text{ if } \{x,y\} \not \subset \{a,b,c\},\\
        1 & \text{ if } \{a,b,c\} \setminus \{x,y\} = \{\max\{a,b,c\}\} \text{ or } \{a,b,c\} \setminus \{x,y\} = \{\min\{a,b,c\}\},\\
        -1 & \text{ otherwise}
    \end{cases}.
\end{gather*}
Similarly, if $x$ is a vertex in $G$, the sign of the vertex $x$ in $e$, denoted $[e:x]$, is defined to be $0$ if $e$ is not incident with $x$, $1$ if $x$ is the larger of the two vertices incident with $e$, and $-1$ otherwise.

We denote by $\delta_0$ the $E \times V$ signed incidence matrix with $(\delta_0)_{e,v}=[e:v]$ for every vertex $v$ and edge $e$ in $G$. Then, $L:=L_0^{\uparrow}:=\delta_0^T \delta_0$ is the usual graph Laplacian. Let $G$ be a graph and let $\T$ be a nonempty set of triangles in $G$. We may view $\{\emptyset\} \cup V(G) \cup E(G) \cup \T$ as a $2$-dimensional abstract simplicial complex. We denote by $\delta_{1,G,\T}$ the usual $|\T| \times |E|$ coboundary map $\re^E \to \re^{\T}.$ That is, if $e$ is an edge in $G$ and $T \in \T$ is a triangle, then, $\left(\delta_{1,G,\T}\right)_{T,e} = [T:e]$. More generally, if $v \in \re^{\T}, w \in \re^{E(G)}$ and $e \in E(G), T \in \T,$ we denote by $[v:T]$ the value of $v$ at $T$ and by $[w:e]$ the value of $w$ at $e$. If $G$ is understood from the context, we may suppress the subscript $G$ and write $\delta_{1,\T}$ instead. We similarly define $\ul{1, G, \T}$ or $\ul{1,\T}$ as the matrix $\delta_{1,G,\T}^T \delta_{1,G,\T},$ and analogously, $\dl{1,\T}=\delta_0\delta_0^T,\dl{2,G,\T}=\delta_{1,G,\T}\delta_{1,G,\T}^T.$ When $\rankg{}{\ul{1,\T}} > 1$, we will also denote by $\tau(\T)$ the second smallest positive eigenvalue of $\ul{1,\T}$, allowing for the possibility $\tau(\T)=\lambda(\T).$ We reserve the symbol $\delta_{1,G}$ or $\delta_1$ for the operator $\delta_{1, G, X_2(\cl{G})},$ i.e., the ``full'' coboundary operator on the graph $G.$ We also define $\ul{1,G,\emptyset}=0_{|E| \times |E|}$. Finally, we define $\lambda(G,\T)=\lambda(\T)$ to be the smallest positive eigenvalue of $\ul{1,G,\T}$ or the smallest positive eigenvalue of $\dl{2,G,\T}.$\par 

%Furthermore, if $T$ is a triangle in $G$ and $1_T$ its characteristic vector, then the vector $\delta_{1,\T}^T(T):=\delta_{1,\T}^T(1_T)$ is a vector of length $|E(G)|$ that is independent of the set of triangles $\T.$ Thus, the expression $\delta_1^T(T)$ is defined unambiguously for a given graph $G$ and a triangle $T$ in $G$.

All (co)boundary operators considered here are over $\re,$ and we assume a global ordering on the vertex set of $G,$ which we will often take as the set $[n]=\{1,\dots,n\}$ for an $n$-vertex set. Furthermore, $\ul{1,\T}$ and $\dl{2,\T}$ will be assumed to have their rows and columns ordered by the lexicographic order on the set of edges in $G$, and the set of triangles in $\T$, respectively. If $e$ is an edge in $G$, we write $\delta_{1,\T}1_e \in \re^{\T}$ for the vector $\delta_11_e,$ where $1_e \in \re^E$ is the characteristic vector of the edge $e$.\par

\subsection{Linear Algebraic Background}
We recall some well-known properties of these matrices in the following theorem. 
\begin{theorem} \label{thm:hodge}
    Let $G$ be a graph and let $\T$ be a nonempty set of triangles in $G$. Let $\delta_0, \delta_{1,\T}, \ul{1,\T}, \dl{2,\T}$ be defined above. Then,
    \begin{enumerate}[(a)]
        \item The matrices $\ul{0},\ \ul{1,\T},\ \dl{1, \T}, \ \dl{2,\T}$ are positive semidefinite, and $\ul{0}$ is the usual graph Laplacian. 
        \item $\delta_{1,\T}\delta_0 = 0, \ \delta_0^T\delta_{1,\T}^T = 0,\ \ul{1,\T}\dl{1,\T}=0, \ \dl{1,\T}\ul{1,\T}=0$.
        \item $\re^E$ orthogonally decomposes as:
        \begin{gather*}
            \re^E = \im \delta_0 \oplus \im \delta_{1,\T}^T \oplus (\ker \delta_0^T \cap \ker \delta_{1,\T}) = \im \dl{1,\T} \oplus \im \ul{1,\T} \oplus (\ker \delta_0^T \cap \ker \delta_{1,\T}),
        \end{gather*}
        and
        \begin{gather*}
            \im \delta_0 = \im \dl{1,\T}, \quad \im \delta_{1,\T}^T = \im \ul{1,\T}, \quad \ker(L_1) = \ker(\delta_0\delta_0^T + \delta_{1,\T}^T\delta_{1,\T}) = \ker \delta_0^T \cap \ker \delta_{1,\T}.
        \end{gather*}
        \item The nonzero parts of the spectra of $\ul{1,\T}$ and $\dl{2,\T}$ are the same.
    \end{enumerate}
\end{theorem}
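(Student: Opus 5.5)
The plan is to reduce everything to two facts: $\delta_{1,\T}\delta_0=0$, which is the key combinatorial identity, and the standard linear algebra of $A^TA$ versus $AA^T$. For (a), each of $\ul{0}=\delta_0^T\delta_0$, $\ul{1,\T}=\delta_{1,\T}^T\delta_{1,\T}$, $\dl{1,\T}=\delta_0\delta_0^T$ and $\dl{2,\T}=\delta_{1,\T}\delta_{1,\T}^T$ is a Gram matrix $M^TM$. Hence $x^TM^TMx=\|Mx\|^2\ge 0$, which gives positive semidefiniteness. To see that $\delta_0^T\delta_0$ is the graph Laplacian, compute its entries:
\begin{itemize}
\item the diagonal entry at $v$ is $\sum_e [e:v]^2=\deg v$;
\item the off-diagonal entry at $(u,v)$ is $[e:u][e:v]=-1$ exactly when $e=\{u,v\}\in E$, and $0$ otherwise.
\end{itemize}

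For (b), the main step is to check $(\delta_{1,\T}\delta_0)_{T,v}=\sum_{e}[T:e][e:v]=0$ for every triangle $T=\{a<b<c\}$ and vertex $v$. If $v\notin T$, every term vanishes. Otherwise $v$ lies in exactly two edges of $T$, so there are two terms to compare.
\begin{itemize}
\item By the sign convention, $[T:ab]=1$, $[T:bc]=1$ and $[T:ac]=-1$.
\item For $v=a$: the two terms are $[T:ab][ab:a]+[T:ac][ac:a]=(1)(-1)+(-1)(-1)=0$.
\item For $v=b$: they are $(1)(1)+(1)(-1)=0$.
\item For $v=c$: they are $(-1)(1)+(1)(1)=0$.
\end{itemize}
This sign bookkeeping is the only step that could go wrong, so I would verify it carefully against the stated definition of $[T:e]$. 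The remaining identities follow formally. Transposing gives $\delta_0^T\delta_{1,\T}^T=0$. Then $\ul{1,\T}\dl{1,\T}=\delta_{1,\T}^T(\delta_{1,\T}\delta_0)\delta_0^T=0$, and its transpose gives $\dl{1,\T}\ul{1,\T}=0$.

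For (c), I would first use $\im M^T=(\ker M)^\perp$ together with $\ker M^TM=\ker M$. The latter holds because $M^TMx=0$ implies $\|Mx\|^2=0$. These give
\begin{gather*}
\im(M^TM)=(\ker M^TM)^\perp=(\ker M)^\perp=\im M^T,
\end{gather*}
and hence $\im\dl{1,\T}=\im\delta_0$ and $\im\ul{1,\T}=\im\delta_{1,\T}^T$. By (b), $\im\delta_0\perp\im\delta_{1,\T}^T$, since $\langle\delta_0x,\delta_{1,\T}^Ty\rangle=\langle\delta_{1,\T}\delta_0x,y\rangle=0$. The orthogonal complement of their sum is $(\im\delta_0)^\perp\cap(\im\delta_{1,\T}^T)^\perp=\ker\delta_0^T\cap\ker\delta_{1,\T}$, which yields the three-way decomposition. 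For the kernel of $L_1$, note that
\begin{gather*}
x^TL_1x=\|\delta_0^Tx\|^2+\|\delta_{1,\T}x\|^2,
\end{gather*}
so $L_1x=0$ forces both terms to vanish; the reverse inclusion is immediate.

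For (d), apply the standard argument to $A=\delta_{1,\T}$. If $A^TAx=\mu x$ with $\mu\ne0$, then $Ax\ne0$ and $AA^T(Ax)=\mu Ax$. The map $x\mapsto Ax$ is injective on the $\mu$-eigenspace, and the symmetric map $y\mapsto A^Ty$ goes back the other way, so the multiplicities agree. Alternatively, one can read this off from the singular value decomposition of $A$.
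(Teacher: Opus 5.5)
Your proof is correct and follows the same outline as the paper's. Gram matrices give (a). The identity $\delta_{1,\T}\delta_0=0$, followed by transposition and expansion, gives (b). Orthogonality of $\im\delta_0$ and $\im\delta_{1,\T}^T$ gives (c), and the $A^TA$ versus $AA^T$ fact gives (d). The one difference is that you verify the boundary-sign identity and the orthogonal decomposition directly, and your sign bookkeeping matches the paper's convention for $[T:e]$ and $[e:v]$. The paper instead cites Duval--Reiner/Hatcher for $\delta_1\delta_0=0$ and Lim for the decomposition.
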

\begin{proof}
    We prove the first part of (b) and give appropriate references to the remaining parts.
    \begin{enumerate}[(a)]
        \item The part (a) is immediate since each of the matrices $\ul{0},\ \ul{1,\T},\ \dl{1, \T}, \ \dl{2,\T}$ is a Gram matrix.
        \item Recall that $\delta_1$ denotes the full edge-triangle coboundary map in $G.$ It is well known that $\delta_1 \delta_0 = 0$ [equation (3.1) in \cite{duval-reiner} or Lemma 2.1 in \cite{hatcherat}]. That is, if $v$ is a vertex and $T$ a triangle in $G$, then,
        \begin{gather*}
            \left(\delta_1\delta_0\right)_{T,v} = \sum_{e \in E(G)}[T:e]\cdot [e:v] = 0.
        \end{gather*}
        However, by definition, the matrix $\delta_{1,\T}$ is obtained by removing the rows indexed by triangles in $G$ not in $\T$. Thus, if $T \in \T$, then 
        \begin{gather*}
            \left(\delta_{1,\T}\delta_0\right)_{T,v} = \left(\delta_{1}\delta_0\right)_{T,v} = \sum_{e \in E(G)}[T:e]\cdot [e:v]=0.
        \end{gather*}
        This proves that $\delta_{1,\T}\delta_0=0.$ By transposing both sides of the equation, we obtain $\delta_0^T \delta_{1,\T}^T=0.$ The remaining two claims follow by expanding $\ul{1,\T}\dl{1,\T}$ and $\dl{1,\T}\ul{1,\T}$ as $\delta_{1,\T}^T\delta_{1,\T}\delta_{0}\delta_{0}^T$ and $\delta_{0}\delta_{0}^T\delta_{1,\T}^T\delta_{1,\T}$, respectively.
        \item Since $\delta_1 \delta_0=0,$ the first decomposition follows using equation (9) of Theorem 5.2 in \cite{lim}, the first part of the second claim and the second decomposition follows by Theorem 5.1 in \cite{lim}, and the last part of the second claim by part (6) of Theorem 5.2 in \cite{lim}.
        \item It follows from the general fact that if $A,B$ are two matrices such that both $AB$ and $BA$ are defined, then the nonzero parts of the spectra of $AB$ and $BA$ are the same.
    \end{enumerate}
\end{proof}

We now precisely state and prove the min-gap identity that demonstrates that studying $\lambda(\T)$ is the right lens through which to view the optimization problem of $\lambda_{\min}^+(L_{1,\T})$. A similar result in the context of clique complexes of graphs is folklore in the Hodge spectral theory, see for example the discussions following Theorem 3.3 in \cite{duval-reiner}. We provide a self-contained proof in our general setting.

 \begin{prop} \label{prop:lambda-min-identity}
     Let $G$ be a graph, $\T$ a nonempty set of triangles in $G$. Let $L$ denote the usual graph Laplacian and $L_1$ the restricted edge total-Laplacian $L_{1,\T}$. Then,
     \begin{gather*}
         \lambda_{\min}^+(L_1) = \min \big\{\lambda_{\min}^+(L), \lambda(\T)\big\}.
     \end{gather*}
     In particular, if $G$ is connected, then,
     \begin{gather*}
         \lambda_{\min}^+(L_1) = \min \big\{\lambda_2(L), \lambda(\T)\big\}.
     \end{gather*}
 \end{prop}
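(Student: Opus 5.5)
The plan is to use the orthogonal Hodge decomposition of Theorem \ref{thm:hodge}(c) and show that $L_1=\dl{1,\T}+\ul{1,\T}$ is block diagonal with respect to it. Write $A=\dl{1,\T}=\delta_0\delta_0^T$ and $B=\ul{1,\T}$. Both are symmetric and positive semidefinite. By Theorem \ref{thm:hodge}(b), $AB=BA=0$, so they commute and can be diagonalized simultaneously by an orthonormal basis of $\re^E$.

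First I show that each of the three summands $U_1=\im A$, $U_2=\im B$, $U_3=\ker A\cap\ker B$ is $L_1$-invariant, and compute $L_1$ on each.
\begin{itemize}
\item On $U_1$: for $x=Ay$ we have $Bx=BAy=0$, so $L_1x=Ax$. Thus $L_1|_{U_1}=A|_{U_1}$.
\item On $U_2$: symmetrically, $L_1|_{U_2}=B|_{U_2}$.
\item On $U_3$: $L_1$ vanishes there.
\end{itemize}
Since the decomposition is orthogonal, the spectrum of $L_1$ (with multiplicity) is the union of the spectra of $A|_{U_1}$, $B|_{U_2}$ and the zero block on $U_3$.

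Next I identify the nonzero eigenvalues of each block.
\begin{itemize}
\item $A$ is symmetric, so $\re^E=\im A\oplus\ker A$ orthogonally. Hence $A|_{\im A}$ is invertible, and its eigenvalues are exactly the positive eigenvalues of $A$.
\item Likewise, the eigenvalues of $B|_{\im B}$ are exactly the positive eigenvalues of $B$, i.e.\ of $\ul{1,\T}$. By definition these have smallest element $\lambda(\T)$.
\item The positive eigenvalues of $A=\delta_0\delta_0^T$ coincide with those of $\delta_0^T\delta_0=L$. This is the same $AB$/$BA$ fact used in Theorem \ref{thm:hodge}(d).
\end{itemize}
Therefore the set of positive eigenvalues of $L_1$ is the union of the positive spectra of $L$ and of $\ul{1,\T}$. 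Taking minima gives the identity.

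Some edge cases need a convention. Since $\T$ is nonempty, $B\neq 0$, so $\lambda(\T)$ is defined. If $G$ has no edges the statement is vacuous; otherwise $L\neq0$. If one positive spectrum were empty, the minimum is simply taken over the other, and I will note this convention.

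For the ``in particular'' statement: if $G$ is connected, the Laplacian $L$ has a one-dimensional kernel spanned by the all-ones vector. Hence $\lambda_{\min}^+(L)=\lambda_2(L)$.

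The main point requiring care is the invariance and block-diagonal step, namely that $L_1$ restricted to $\im A$ equals $A$. This relies entirely on $BA=0$, which is Theorem \ref{thm:hodge}(b). The rest is bookkeeping of spectra.
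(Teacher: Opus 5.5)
Your proof is correct. It uses the same ingredients as the paper but a different mechanism.

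\textbf{The paper's route (variational).} It applies Courant--Fischer to $L_1$ on $(\ker L_1)^\perp=\im\delta_0\oplus\im\delta_{1,\T}^T$ and writes $x=f+g$. It then uses $\delta_{1,\T}\delta_0=0$ to kill the cross terms in the quadratic form, and finally minimizes $\alpha^2\lambda_{\min}^+(\dl{1,\T})+\beta^2\lambda(\T)$ over $\alpha^2+\beta^2=1$.

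\textbf{Your route (spectral).} You show that the three Hodge summands are $L_1$-invariant, with $L_1$ acting as $\dl{1,\T}$, $\ul{1,\T}$ and $0$ on them, so $L_1$ is block diagonal.

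\textbf{Comparison.} Both routes rest on Theorem~\ref{thm:hodge}(b),(c) and the $AB$/$BA$ spectrum fact. Yours gives a stronger conclusion: the positive spectrum of $L_1$, with multiplicities, is exactly the multiset union of the positive spectra of $L$ and $\ul{1,\T}$. The minimum identity is then immediate, and you avoid the optimization over $(\alpha,\beta)$. The paper's Rayleigh-quotient argument extracts only the bottom eigenvalue, but it does so directly from the quadratic form without discussing invariant subspaces.

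Your edge-case remarks are harmless but unnecessary. Since $\T\neq\emptyset$, $G$ contains a triangle, so $L$ and $\ul{1,\T}$ are both nonzero and both positive spectra are nonempty.
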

 \begin{proof}
     It suffices to prove that
     \begin{gather}
         \lambda_{\min}^+(L_1) = \min \big\{\lambda_{\min}^+(\dl{1,\T}), \lambda(\T)\big\},
     \end{gather}
     since the nonzero part of the spectra of $\dl{1}=\delta_{0}\delta_0^T$ is the same as the nonzero part of the spectra of $L=\delta_0^T \delta_0$. Since $L_1$ is a PSD matrix, $\lambda_{\min}^+(L_1)$ is the smallest nonzero eigenvalue of $L_1.$ Since $L_1$ is symmetric, by the Courant-Fischer Theorem,
     \begin{align*}
         \lambda_{\min}^+(L_1) &= \min_{x \perp \ker(L_1),\ \|x\|=1} x^T L_1 x\\
         &= \min_{\substack{f \in \im \delta_0, \ g\in \im \delta_{1,\T}^T,\\ \|f\|^2+\|g\|^2=1}} (f+g)^T(\delta_0\delta_0^T + \delta_{1,\T}^T \delta_{1,\T})(f+g) \tag{by Theorem \ref{thm:hodge} (c)}\\
         &= \min_{\substack{f \in \im \delta_0, \ g\in \im \delta_{1,\T}^T,\\ \|f\|^2+\|g\|^2=1}} f^T(\delta_0\delta_0^T)f + g^T(\delta_{1,\T}^T \delta_{1,\T})g \tag{\text{Theorem } \ref{thm:hodge} (b)}\\
         &= \min_{0 \leq \alpha, \beta \leq 1,\  \alpha^2+\beta^2=1} \Big( \min_{f\in \im \delta_0, \ \|f\|=\alpha} f^T\dl{1,\T}f \ +\  \min_{g\in \im \delta_{1,\T}^T, \ \|g\|=\beta} g^T\ul{1,\T}g\Big)
     \end{align*}
     However, since
     \begin{gather*}
        (\ker \dl{1,\T})^{\perp} =  (\ker \delta_0^T)^{\perp} = \im \delta_0, \text{ and,}\\
        (\ker \ul{1,\T})^{\perp} =  (\ker \delta_{1,\T})^{\perp} = \im \delta_{1,\T}^T,
     \end{gather*}
     the last quantity above is equal to:
     \begin{align*}
         & \min_{0 \leq \alpha, \beta \leq 1,\  \alpha^2+\beta^2=1} \Big( \alpha^2 \lambda_{\min}^+(\dl{1,\T}) + \beta^2 \lambda_{\min}^+(\ul{1,\T})\Big) = \min \big\{\lambda_{\min}^+(\dl{1,\T}), \lambda(\T)\big\}. 
     \end{align*}
     This completes the proof.
 \end{proof}

\subsection{\texorpdfstring{The $\lambda$ Function}{The lambda function}}
In this short subsection, we formalize the notion of the $\lambda$ function. Let $\T$ be a nonempty finite set of $3$-element subsets of a totally ordered set $X.$ If $G$ and $G'$ are two graphs containing the triangles in $\T,$ then,
\begin{gather*}
    \lambda_{\min}^+(\ul{1,G,\T})=\lambda_{\min}^+(\dl{2,G,\T})=\lambda_{\min}^+(\dl{2,G',\T})=\lambda_{\min}^+(\ul{1,G',\T}).
\end{gather*}
Here, the middle two terms are equal since the matrices $\dl{2,G,\T}$ and $\dl{2,G',\T}$ only depend on how the triangles in $\T$ share edges, and are otherwise independent of the supporting graphs $G$ and $G'.$

\begin{defn}[Support graph]
    Let $\T$ be a nonempty finite set of $3$-element subsets of a totally ordered set $X.$ The support graph of $\T$, denoted $\Gamma_0(\T)$, is the graph whose vertex set is $\bigcup_{T \in \T}T$, and two distinct vertices $x,y$ are adjacent if and only if there is a $T \in \T$ with $x, y \in T.$
\end{defn}

In particular, every vertex and every edge in $\Gamma_0(\T)$ is contained in at least one triangle in $\T.$ The above discussion motivates the following definition.

\begin{defn}
    Let $\T$ be a nonempty finite set of $3$-element subsets of a totally ordered set $X,$ and let $\Gamma_0(\T)$ be the support graph for $\T.$ We define $\lambda(\T):=\lambda_{\min}^+(\ul{1,\T}).$
\end{defn}
% \begin{prop}
%     Let $G$ be a graph, $\T$ a nonempty set of triangles in $G$. Then,
%     \begin{gather*}
%         \lambda(\T)=\lambda_{\min}^+(\ul{1,\T}).
%     \end{gather*}
% \end{prop}

Finally, we note that $\lambda(\T)=\lambda_{\min}^+(\dl{2,\Gamma_0(\T), \T})$ is independent of the ordering of the vertex set of $V(\Gamma_0(\T)) = \bigcup_{T \in \T}T$, because changing the total order on $V$ results in conjugating $\dl{2,\Gamma_0(\T), \T}$ by a diagonal orthogonal matrix (see Lemma A1 in \cite{cioaba-guo-mim-ji}). Thus, we may view $\lambda$ as a spectral parameter of finite families of $3$-element subsets, independent of the total order on the vertex set and of the host graph.

This gives the motivation for treating $\lambda(\T)$ as a parameter of the triangle family $\T$ itself, not the underlying graph.

\section{Extremality, Rigidity, and Compression}
\label{sec:extremality-rigidity-compression}
\subsection{Large Minimum Degree Subgraphs and Triangle Overlaps}
\label{subsec:large-minimum-degree-subgraphs-triangle-overlaps}

% \begin{defn}[Triangle-intersection graph]
%     Let $\T$ be nonempty finite set of $3$-element subsets of a totally ordered set $X.$ The triangle-intersection graph of $\T$, denoted $\Gamma_1(\T),$ is the graph whose vertex set is $\T$, and two vertices $T_1,T_2$ are adjacent if and only if $|T_1 \cap T_2|=2.$
% \end{defn}

\begin{defn}
    Let $G$ be a graph. We denote by $d_{\min}(G)$ the minimum vertex degree in $G:$
    \begin{gather*}
        d_{\min}(G) = \min_{v \in V(G)} d_G(v).
    \end{gather*}
\end{defn}

\begin{theorem} \label{thm:large-min-degree-overlap}
    Let $\T$ be a nonempty finite set of $3$-element subsets of some totally ordered set $X$, and let $n$ be a positive integer such that $n \leq \lceil\lambda(\T)\rceil.$ Let $G$ denote $\Gamma_0(\T).$ Then,
    \begin{enumerate} [(a)]
        \item for every edge $e=\{x,y\}$ in $G$, there are at least $(n-2)$ triangles in $\T$ containing the edge $e,$
        \item for every edge $e=\{x,y\}$ in $G$, we have $|N_G(x) \cap N_G(y)| \geq (n-2),$
        \item for every vertex $x$ in $G,$ there are at least $(n-2)$ triangles in $\T$ containing $x$ as a vertex, and $d_G(x) \geq (n-1)$ and,
        \item $d_{\min}(G) \geq (n-1)$ and $|V(G)|\geq n.$
    \end{enumerate}
\end{theorem}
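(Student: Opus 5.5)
The plan is to get the whole theorem from one Rayleigh-quotient estimate. Fix a support edge $e$ and test the down-Laplacian $\dl{2,\T}=\delta_{1,\T}\delta_{1,\T}^T$ on the vector $x=\delta_{1,\T}1_e\in\re^{\T}$. This avoids the usual difficulty that $1_e$ need not be orthogonal to $\ker \ul{1,\T}$. Indeed, $x$ lies in $\im\delta_{1,\T}=(\ker\delta_{1,\T}^T)^\perp=(\ker\dl{2,\T})^\perp$. Moreover $x\neq 0$, because $e$ lies in at least one triangle of $\T$; in fact $\|x\|^2=d_e(\T)$, since $x$ has an entry $\pm1$ exactly at the triangles containing $e$.

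By Courant--Fischer and Theorem \ref{thm:hodge}(d), which identifies the nonzero spectra of $\dl{2,\T}$ and $\ul{1,\T}$, we get
\begin{gather*}
\lambda(\T)\le \frac{x^T\delta_{1,\T}\delta_{1,\T}^T x}{x^Tx}=\frac{\|\ul{1,\T}1_e\|^2}{d_e(\T)}.
\end{gather*}
Next I compute the column $\ul{1,\T}1_e$. Its diagonal entry is $d_e(\T)$. Its entry at an edge $f\neq e$ is $\sum_T [T:e][T:f]$ over triangles $T\in\T$ containing both $e$ and $f$. Two distinct edges lie in at most one common triangle, namely the one they span, so each such entry is $0$ or $\pm1$. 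Each triangle containing $e$ contributes exactly two such edges $f$, and distinct triangles through $e$ give distinct edges $f$. Hence $\|\ul{1,\T}1_e\|^2=d_e^2+2d_e$, which gives $\lambda(\T)\le d_e(\T)+2$. Since $d_e(\T)$ is an integer, $d_e(\T)\ge\lceil\lambda(\T)\rceil-2\ge n-2$, proving (a).

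The remaining parts are combinatorial consequences of (a).
\begin{enumerate}[(a)]
\setcounter{enumi}{1}
\item Each triangle $\{x,y,z\}\in\T$ containing $e=\{x,y\}$ yields a distinct common neighbor $z$ of $x$ and $y$ in $G$. So $|N_G(x)\cap N_G(y)|\ge d_e(\T)\ge n-2$.
\item Every vertex $x$ of $G$ lies on some support edge $e=\{x,y\}$. All triangles through $e$ contain $x$, so at least $n-2$ triangles contain $x$. Also $x$ is adjacent to $y$ and to the at least $n-2$ common neighbors from (b), so $d_G(x)\ge n-1$. When $n\le 2$ the degree bound is trivial, since $x$ lies in a triangle.
\item Taking the minimum of (c) over all vertices gives $d_{\min}(G)\ge n-1$. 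Then $|V(G)|\ge d_{\min}(G)+1\ge n$.
\end{enumerate}

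The only delicate step is the choice of test vector. Using $\delta_{1,\T}1_e$ on the triangle side, rather than $1_e$ on the edge side, puts the vector automatically in the orthogonal complement of the kernel. It also makes the numerator an exact count, using the fact that two edges determine at most one triangle.
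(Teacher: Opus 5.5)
Your proposal is correct and is essentially the paper's proof. It uses the same test vector $\delta_{1,\T}1_e \in (\ker \dl{2,\T})^{\perp}$, with $\|\delta_{1,\T}1_e\|^2=d_e$ and $\|\ul{1,\T}1_e\|^2=d_e^2+2d_e$, which gives $\lambda(\T)\le d_e+2$. Parts (b)--(d) then follow by the same combinatorial count of the distinct third vertices of the triangles through $e$.
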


\begin{proof}
    The statements are immediate for $n=1$ and $n=2$. We assume $n \geq 3$. Let $1_e \in \re^{E(G)}$ be the indicator vector of $e$. Let $0 < d_e = |\{T \in \T: e \text{ is an edge of } T\}|$, and $g=\delta_{1,\T}(1_e).$ Then,
    \begin{gather*}
        g = \delta_{1,\T}(1_e) \in \im \delta_{1,\T}=\im \dl{2,\T} = \left(\ker \dl{2,\T}\right)^{\perp}.
    \end{gather*}
    Thus,
    \begin{gather*}
        \lambda(\T) = \lambda_{\min}^+(\ul{1,\T}) = \lambda_{\min}^+(\dl{2,\T}) \leq \frac{\langle g, \dl{2,\T}g \rangle }{\|g\|^2} = \frac{\|\delta_{1,\T}^Tg\|^2}{\|g\|^2}. 
    \end{gather*}
    Now, $g$ has exactly $d_e$ nonzero coordinates, each $\pm 1.$ So, $\|g\|^2=d_e.$ On the other hand, the triangles in $\T$ containing the edge $e$ share the edge $e,$ and each has a vertex and two edges not contained in any other triangle in $\T$ containing the edge $e.$ Thus, $(\delta_{1,\T}^Tg)_e = \sum_{e \subset T \in \T}[T:e]^2 = d_e,$ and, $\delta_{1,\T}^Tg$ has exactly $2d_e$ additional nonzero coordinates, each with entries $\pm 1.$ Consequently, $\|\delta_{1,\T}^Tg\|^2 = d_e^2+2d_e.$ We thus obtain:
    \begin{gather*}
        \lambda(\T) \leq \frac{\|\delta_{1,\T}^Tg\|^2}{\|g\|^2} = \frac{d_e^2+2d_e}{d_e}=d_e+2. 
    \end{gather*}
    Since $d_e$ is a positive integer, we conclude that $n \leq \lceil \lambda(\T) \rceil \leq d_e + 2.$ This proves part (a). Part (b) follows from part (a) by observing that any edge $e$ in $G$ is contained in at least $(n-2)$ triangles in $\T$, and the third vertices of these triangles are pairwise distinct, and are common neighbors of $x$ and $y$. To prove part (c), note that $x$ is contained in at least one edge in $G$, say, $e.$ Then, by part (a), there are at least $(n-2)$ triangles in $\T$ containing the edge $e$, and each of these triangles also contain the vertex $x.$ Since two distinct triangles in $G$ can share at most one edge, each of these triangles contains an edge distinct from $e$ that is incident with $x.$ The other endpoints of these edges are pairwise distinct, and distinct from $x$. Thus, $d_G(x) \geq (n-2)+1=n-1$. This proves part (c). Part (d) follows from part (c).
\end{proof}

Theorem \ref{thm:large-min-degree-overlap} is the main local structural engine of this paper. It shows that a large spectral gap $\lambda(\T)$ forces every support edge to have large codegree, and therefore prevents a sparse or weakly overlapping support graph. In what follows, $\T$ will be assumed to be a nonempty finite set of $3$-element subsets of a totally ordered set $X$, unless otherwise stated.

\begin{theorem} \label{thm:structural-theorem-1}
    Let $G$ be a graph, $\T$ a nonempty set of triangles in $G,$ and $\lambda = \lambda(\T).$ Then, $G$ has a subgraph $H$, namely $\Gamma_0(\T),$ such that $|V(H)|\geq \lceil \lambda \rceil$, $d_{\min}(H)\geq \lceil \lambda \rceil-1,$ and every edge in $H$ is contained in at least $(\lceil \lambda \rceil-2)$ triangles in $H$. 
\end{theorem}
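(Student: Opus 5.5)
The plan is to derive the statement directly from Theorem \ref{thm:large-min-degree-overlap}, applied with the choice $n=\lceil \lambda \rceil$. This choice is admissible: $\T$ is nonempty, so $\ul{1,\T}\neq 0$ and $\lambda=\lambda(\T)>0$, hence $n$ is a positive integer with $n\le\lceil\lambda(\T)\rceil$.

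First, I will take $H=\Gamma_0(\T)$ and check that it is a subgraph of $G$. Every vertex of $\Gamma_0(\T)$ lies in some triangle $T\in\T$, and every triangle of $\T$ is a triangle of $G$, so each such vertex is a vertex of $G$. Likewise, every edge $\{x,y\}$ of $\Gamma_0(\T)$ satisfies $\{x,y\}\subset T$ for some $T\in\T$, and is therefore an edge of $G$.

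Second, Theorem \ref{thm:large-min-degree-overlap}(d) with $n=\lceil\lambda\rceil$ gives $|V(H)|\ge\lceil\lambda\rceil$ and $d_{\min}(H)\ge\lceil\lambda\rceil-1$.

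Third, Theorem \ref{thm:large-min-degree-overlap}(a) gives that every edge $e$ of $H$ lies in at least $\lceil\lambda\rceil-2$ triangles of $\T$. It remains to see that these are triangles \emph{in $H$}. Each $T\in\T$ containing $e$ has all three of its vertices and all three of its edges in $\Gamma_0(\T)$ by the definition of the support graph, so $T$ is a triangle of $H$. Distinct members of $\T$ are distinct triangles, so the count is preserved.

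There is no genuine obstacle here. The only point needing care is this last one: the bound in Theorem \ref{thm:large-min-degree-overlap}(a) counts triangles of $\T$ rather than triangles of $H$. Since every triangle of $\T$ is a triangle of $H$, the number of triangles of $H$ containing $e$ is at least the number of triangles of $\T$ containing $e$, so the required bound follows.
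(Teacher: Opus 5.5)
Your proposal is correct and is the paper's argument: take $H=\Gamma_0(\T)$ and apply Theorem~\ref{thm:large-min-degree-overlap} with $n=\lceil\lambda\rceil$. The details you add are the ones the paper leaves implicit, namely that $\lambda>0$ makes this $n$ admissible, that $\Gamma_0(\T)\subseteq G$, and that triangles of $\T$ count as triangles of $H$.
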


\begin{proof}
    Let $H=\Gamma_0(\T).$ Then, by Theorem \ref{thm:large-min-degree-overlap}, $H$ is a subgraph of $G$ with the desired properties.
\end{proof}

\begin{corollary} \label{thm:structural-theorem-2}
    Let $G$ be a graph with at least one triangle and let $\ul{1}$ be the edge up-Laplacian of the clique complex of $G.$ Denote $\lambda = \lambda_{\min}^+(\ul{1}(G)).$ Let $H$ be the subgraph of $G$ consisting of the vertices and edges of $G$ that are contained in at least one triangle in $G$. Then, $|V(H)|\geq \lceil \lambda \rceil$, $d_{\min}(H) \geq \lceil \lambda \rceil-1,$ and every edge in $H$ is contained in at least $(\lceil \lambda \rceil-2)$ triangles in $H$.
\end{corollary}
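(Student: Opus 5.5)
The plan is to reduce Corollary \ref{thm:structural-theorem-2} directly to Theorem \ref{thm:structural-theorem-1} by choosing $\T$ to be the set of \emph{all} triangles of $G$. This $\T$ is nonempty since $G$ has at least one triangle.

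First, I identify the spectral quantities. The edge up-Laplacian of the clique complex of $G$ is $\ul{1}(G)=\delta_1^T\delta_1$, where $\delta_1=\delta_{1,G,X_2(\cl{G})}$ is the full coboundary operator. With $\T$ the full triangle set, this is exactly $\ul{1,G,\T}$. By the discussion in the subsection on the $\lambda$ function, $\lambda_{\min}^+(\ul{1,G,\T})$ does not depend on the host graph, so it equals $\lambda(\T)$. Hence the $\lambda$ in the corollary is precisely $\lambda(\T)$.

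Second, I identify $H$ with the support graph $\Gamma_0(\T)$. A vertex or edge of $G$ lies in some triangle of $G$ exactly when it lies in some member of $\T$. By the definition of the support graph, the vertices and edges of $\Gamma_0(\T)$ are exactly these. So $H=\Gamma_0(\T)$ as graphs.

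Third, I apply Theorem \ref{thm:structural-theorem-1}, equivalently Theorem \ref{thm:large-min-degree-overlap} with $n=\lceil\lambda\rceil$. This gives $|V(H)|\ge\lceil\lambda\rceil$ and $d_{\min}(H)\ge\lceil\lambda\rceil-1$. It also gives that every edge of $H$ lies in at least $\lceil\lambda\rceil-2$ triangles of $\T$.

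The one point needing care is that the corollary counts triangles \emph{in $H$}, whereas the theorem counts triangles in $\T$. Each triangle of $\T$ has all its vertices and edges in $\Gamma_0(\T)=H$, so it is a triangle of $H$. The count of triangles in $H$ containing a given edge is therefore at least the count in $\T$, and the bound transfers. There is no real obstacle; the main step is simply making the identifications $\lambda=\lambda(\T)$ and $H=\Gamma_0(\T)$ explicit.
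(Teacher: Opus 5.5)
Your proposal is correct and follows the paper's proof: take $\T$ to be the full triangle set of $G$, note that $\lambda=\lambda(\T)$ and $H=\Gamma_0(\T)$, and apply Theorem \ref{thm:structural-theorem-1}. Your extra check that triangles of $\T$ are triangles of $H$ is a detail the paper leaves implicit. In fact, since $H\subseteq G$, the triangles of $H$ are exactly $\T$.
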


\begin{proof}
    Let $\T$ be the set of all triangles in a graph $G.$ Then, $\lambda(\T)=\lambda_{\min}^+(\ul{1}(G))= \lambda,$ and so, $\Gamma_0(\T)=H$ satisfies the desired properties by Theorem \ref{thm:structural-theorem-1}.
\end{proof}

\subsection{Rigidity and Extremal Constructions}
\label{subsec:rigidity-extremal-constructions}
Theorem \ref{thm:large-min-degree-overlap} gives local density around each vertex and edge in the support graph. The next counting lemma gives an upper bound on the number of vertices in $\Gamma_0(\T)$ in terms of $|\T|$ and $\lambda(\T),$ which will be crucial in the proofs of the rigidity results. 

\begin{lemma} \label{lemma:vertex-number-bound-by-lambda-t}
    Let $|\T|=t>0,\ \lambda := \lambda(\T) > 2,$ and $G=\Gamma_0(\T).$ If $v=|V(G)|$ and $e=|E(G)|,$ then:
    \begin{gather*}
        v \leq \frac{2e}{\lceil \lambda \rceil-1}, \quad e \leq \frac{3t}{\lceil \lambda \rceil-2}, \quad \text{and, } v \leq \frac{6t}{(\lceil \lambda \rceil-1)(\lceil \lambda \rceil-2)}.
    \end{gather*}
\end{lemma}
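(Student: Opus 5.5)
This is a double-counting argument driven by the local bounds of Theorem \ref{thm:large-min-degree-overlap}, applied with $n=\lceil \lambda \rceil$; that choice of $n$ is admissible since trivially $n \leq \lceil\lambda(\T)\rceil$. The hypothesis $\lambda>2$ gives $\lceil \lambda\rceil \geq 3$, so both $\lceil\lambda\rceil-1$ and $\lceil\lambda\rceil-2$ are positive and we may divide by them.

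For the first inequality, I will count edge–endpoint incidences of $G=\Gamma_0(\T)$ via the handshake lemma:
\begin{gather*}
2e=\sum_{x\in V(G)} d_G(x).
\end{gather*}
By Theorem \ref{thm:large-min-degree-overlap}(d), $d_G(x)\geq \lceil\lambda\rceil-1$ for every vertex $x$. Hence $2e\geq v(\lceil\lambda\rceil-1)$, which rearranges to the first bound.

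For the second inequality, I will count pairs $(f,T)$ with $f\in E(G)$, $T\in\T$ and $f\subset T$. Each triangle contains exactly three edges, and each of those edges lies in $G$ by the definition of the support graph, so the number of such pairs is exactly $3t$. On the other hand, Theorem \ref{thm:large-min-degree-overlap}(a) says each edge of $G$ lies in at least $\lceil\lambda\rceil-2$ triangles of $\T$. Therefore $3t=\sum_{f\in E(G)} d_f(\T)\geq e(\lceil\lambda\rceil-2)$, giving $e\leq 3t/(\lceil\lambda\rceil-2)$.

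The third inequality follows by substituting the second bound into the first. There is no real obstacle here. The only points needing care are that every edge of $G$ is genuinely covered by a triangle, so the count $3t$ is exact, and that $\lambda>2$ ensures the denominators are nonzero.
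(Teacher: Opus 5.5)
Your proposal is correct and matches the paper's proof: the handshake bound $2e \ge v(\lceil\lambda\rceil-1)$ from the minimum-degree conclusion of Theorem~\ref{thm:large-min-degree-overlap}, the exact double count of edge--triangle incidences giving $3t \ge e(\lceil\lambda\rceil-2)$ from part (a), and then combining the two inequalities. Your note that $\lambda>2$ makes both denominators positive is a point the paper leaves implicit.
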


\begin{proof}
    Denote $n = \lceil \lambda \rceil$. By Theorem \ref{thm:large-min-degree-overlap}, each vertex $x$ is contained in at least $(n-1)$ edges in $G$. Thus, 
    \begin{gather*}
        v(n-1) \leq \sum_{x \in V(G)}d_G(x) = 2e \text{, or, } v \leq \frac{2e}{n-1}.
    \end{gather*}
    To prove the second inequality, we count the number of pairs $(p,T)$, where $p \in E(G), T \in \T,$ and $p$ is an edge of $T.$ By Theorem \ref{thm:large-min-degree-overlap}, each edge $p$ of $G$ is contained in at least $(n-2)$ triangles in $\T$, thus, the number of such pairs is at least $e(n-2).$ On the other hand, each triangle contains exactly three edges in $G$. Thus, the number of such pairs is exactly $3t.$ Thus,
    \begin{gather*}
        e(n-2) \leq 3t, \text{ or, } e \leq \frac{3t}{n-2}.
    \end{gather*}
    Combining these two inequalities, we obtain:
    \begin{gather*}
        v \leq \frac{2}{(n-1)(n-2)}(e(n-2)) \leq \frac{6t}{(n-1)(n-2)},
    \end{gather*}
    as needed.
\end{proof}

\begin{defn}
    For positive integer $t$, we define:
    \begin{gather*}
        \phi(t) = \max_{|\T|=t} \lambda(\T),
    \end{gather*}
    and,
    \begin{gather*}
        \Lambda(t) = \max_{1 \leq s \leq t} \phi(s).
    \end{gather*}
\end{defn}
Here, $\phi$ is well defined, since every family $\T$ of size $t$ uses at most $3t$ vertices, hence, up-to relabeling, there are only finitely many such families.

\begin{theorem} \label{thm:rigidity-lambda-at-C(n,3)}
    Let $n \geq 3$ be a positive integer. If $|\T| < \binom{n}{3}$, then $\lambda(\T)\leq n-1.$ Furthermore, if $|\T|=\binom{n}{3}$ and $\lambda(\T) > n-1,$ then $\lambda(\T)=n$, $\Gamma_0(\T)$ is isomorphic to $K_n$, and the triangles in $\T$ are precisely the set of all possible triangles on $V(\Gamma_0(\T)).$
\end{theorem}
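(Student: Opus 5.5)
Both parts come from Lemma \ref{lemma:vertex-number-bound-by-lambda-t}, which bounds the number of support vertices, played against Theorem \ref{thm:large-min-degree-overlap}, which forces at least $\lceil\lambda\rceil$ vertices.

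\emph{First claim.} Let $|\T|=t<\binom{n}{3}$ and suppose, for contradiction, that $\lambda(\T)>n-1$. Put $m=\lceil\lambda(\T)\rceil$, so $m\ge n\ge 3$ and $\lambda>2$; hence Lemma \ref{lemma:vertex-number-bound-by-lambda-t} applies. With $v=|V(\Gamma_0(\T))|$ it gives
\begin{gather*}
v\le \frac{6t}{(m-1)(m-2)}.
\end{gather*}
Theorem \ref{thm:large-min-degree-overlap}(d) gives $v\ge m$. Combining,
\begin{gather*}
t\ge \frac{m(m-1)(m-2)}{6}=\binom{m}{3}\ge\binom{n}{3},
\end{gather*}
which contradicts $t<\binom{n}{3}$. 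So $\lambda(\T)\le n-1$.

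\emph{Second claim.} Let $|\T|=\binom{n}{3}$ and $\lambda(\T)>n-1$, and again set $m=\lceil\lambda\rceil\ge n$. The same chain gives $\binom{n}{3}\ge\binom{m}{3}$. Since $\binom{\cdot}{3}$ is strictly increasing on integers $\ge 3$, this forces $m=n$. Now every inequality in the chain is tight, so $v=n$ exactly. A family of triangles on $n$ vertices has at most $\binom{n}{3}$ members, and $\T$ has exactly that many, so $\T$ is the full set of triangles on $V(\Gamma_0(\T))$. It follows that $\Gamma_0(\T)\cong K_n$.

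It remains to show $\lambda(\T)=n$, i.e.\ that the smallest positive eigenvalue of the up-Laplacian of the full $2$-skeleton of the simplex on $n$ vertices is $n$. The route is:
\begin{itemize}
\item For the complete complex, $L_1^{\downarrow}+L_1^{\uparrow}=nI$ on $\re^{E}$; each diagonal entry is $2+(n-2)=n$, and the off-diagonal contributions cancel by the sign conventions.
\item Hence $L_1^{\uparrow}=nI-L_1^{\downarrow}$. By Theorem \ref{thm:hodge}(c), $\im L_1^{\uparrow}$ is orthogonal to $\im\delta_0=\im L_1^{\downarrow}$, so $L_1^{\downarrow}$ vanishes there.
\item Therefore $L_1^{\uparrow}$ acts as $nI$ on its image, and every positive eigenvalue equals $n$.
\end{itemize}

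I expect this last step, the cancellation of the off-diagonal entries, to be the main obstacle. It is a routine sign check: for two edges sharing exactly one vertex, the down-contribution and the up-contribution from their common triangle have opposite signs. Alternatively, it can be cited as the standard computation for the simplex, or derived from the fact that $\lambda\le n$ by the bound $\lambda\le d_e+2$ in the proof of Theorem \ref{thm:large-min-degree-overlap}, where $d_e=n-2$. That bound alone gives only $\lambda\le n$, so the matching lower bound still needs the identity above.
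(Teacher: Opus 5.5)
Your proposal is correct and follows the paper's proof. Both parts play the vertex bound of Lemma~\ref{lemma:vertex-number-bound-by-lambda-t} against $|V(\Gamma_0(\T))|\ge\lceil\lambda(\T)\rceil$ from Theorem~\ref{thm:large-min-degree-overlap}, and your rearrangement into $t\ge\binom{m}{3}$ is equivalent to the paper's direct bounds $v<n$ (resp.\ $v\le n$). The only difference is the final value $\lambda(\T)=n$: the paper cites a standard computation for $K_n$, while you derive it yourself from $L_1^{\downarrow}+L_1^{\uparrow}=nI$ on the complete $2$-skeleton, and that derivation is sound, including the off-diagonal cancellation, which follows from $\delta_{1}\delta_0=0$.
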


\begin{proof}
    We prove the first statement first. Assume on the contrary that $\lambda(\T) > n-1.$ Then, $\lceil \lambda(\T) \rceil \geq n$. By Lemma \ref{lemma:vertex-number-bound-by-lambda-t},
    \begin{gather*}
        |V(\Gamma_0(\T))| \leq \frac{6|\T|}{(\lceil \lambda(\T)\rceil-1)(\lceil \lambda(\T)\rceil-2)} < \frac{6\binom{n}{3}}{(n-1)(n-2)}=n.
    \end{gather*}
    On the other hand, by Theorem \ref{thm:large-min-degree-overlap}, we have $|V(\Gamma_0(\T))| \geq \lceil \lambda(\T) \rceil \geq n,$ which is a contradiction. This proves the first statement.\par 

    Assume now that $|\T|=\binom{n}{3}$ and $\lambda(\T) > n-1$. Let $G=\Gamma_0(\T)$ and $v = |V(G)|.$ By Theorem \ref{thm:large-min-degree-overlap}, we have $v \geq \lceil \lambda(\T) \rceil \geq n.$ On the other hand, by Lemma \ref{lemma:vertex-number-bound-by-lambda-t}, we have
    \begin{gather*}
        v \leq \frac{6|\T|}{(\lceil \lambda(\T)\rceil-1)(\lceil \lambda(\T)\rceil-2)} \leq \frac{6\binom{n}{3}}{(n-1)(n-2)}=n.
    \end{gather*}
    Thus, $v = n.$ Since $|\T|=\binom{n}{3}$ and each triangle $T$ in $\T$ is a triangle in $G,$ we have that $\T$ is the set of all possible triangles in $G,$ and $G \cong K_n.$ Finally, by Example 2.3 in \cite{bgp}, we have $\lambda(\T)=n,$ as needed.
\end{proof}

\begin{corollary} \label{cor:phi-n}
    For any positive integer $n \geq 3,$ 
    \begin{gather*}
        \phi \left(\binom{n}{3}\right)=n,
    \end{gather*}
    and up to relabeling of the vertices, the unique $\T$ achieving this is the full set of triangles in $K_n.$ Equivalently,
    \begin{gather*}
        \min\{|\T| : \lambda(\T) \geq n\} = \binom{n}{3},
    \end{gather*}
    and the unique minimizer is the full triangle set of $K_n.$
\end{corollary}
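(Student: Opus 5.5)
The plan is to read Corollary~\ref{cor:phi-n} as an immediate repackaging of Theorem~\ref{thm:rigidity-lambda-at-C(n,3)}, combined with the known value of $\lambda$ on the complete triangle family. I would prove it in three short steps.

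\textbf{Attainment.} Let $\T_n$ be the set of all $\binom{n}{3}$ triangles on $[n]$. By Example 2.3 in \cite{bgp}, already used at the end of the proof of Theorem~\ref{thm:rigidity-lambda-at-C(n,3)}, we have $\lambda(\T_n)=n$. Hence $\phi\bigl(\binom{n}{3}\bigr)\geq n$.

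\textbf{Upper bound and uniqueness.} Take any $\T$ with $|\T|=\binom{n}{3}$. If $\lambda(\T)\leq n-1$, then $\lambda(\T)<n$. Otherwise $\lambda(\T)>n-1$, and the second part of Theorem~\ref{thm:rigidity-lambda-at-C(n,3)} gives $\lambda(\T)=n$, $\Gamma_0(\T)\cong K_n$, and $\T$ equal to the full triangle set on $V(\Gamma_0(\T))$. In either case $\lambda(\T)\leq n$, so $\phi\bigl(\binom{n}{3}\bigr)=n$. Moreover, any $\T$ with $\lambda(\T)=n>n-1$ falls into the second case, so up to relabeling it is $\T_n$. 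Relabeling is harmless because $\lambda$ does not depend on the vertex ordering, as noted at the end of Section~\ref{sec:notations-and-background}.

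\textbf{The equivalent formulation.} If $|\T|<\binom{n}{3}$, the first part of Theorem~\ref{thm:rigidity-lambda-at-C(n,3)} gives $\lambda(\T)\leq n-1<n$. Therefore every $\T$ with $\lambda(\T)\geq n$ satisfies $|\T|\geq\binom{n}{3}$. Since $\T_n$ attains $\lambda=n$ with exactly $\binom{n}{3}$ triangles, the minimum is $\binom{n}{3}$. A minimizer has $|\T|=\binom{n}{3}$ and $\lambda(\T)\geq n>n-1$, so by the uniqueness already shown it is the full triangle set of $K_n$.

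I do not expect a real obstacle. The only external input is the computation $\lambda(\T_n)=n$. If one wanted to avoid citing it, one could check that $\ul{1,\T_n}+\dl{1}=nI$ on $\re^{E(K_n)}$; this forces all nonzero eigenvalues of $\ul{1,\T_n}$ to equal $n$, because $\ker \dl{1}\cap\ker\ul{1,\T_n}=0$ when $H^1(K_n)=0$ and the two operators act on orthogonal images.
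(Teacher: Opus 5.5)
Your proposal is correct and follows the paper's proof essentially step for step. Attainment comes from the $K_n$ computation in \cite{bgp}. The two parts of Theorem~\ref{thm:rigidity-lambda-at-C(n,3)} then give the upper bound, the uniqueness, and the minimum-budget reformulation.

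Your optional self-contained check that $\dl{1}+\ul{1,\T_n}=nI$ is also valid. That identity already forces $\ker L_1=0$, so you do not need to appeal to $H^1(K_n)=0$.
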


\begin{proof}
    By standard computation of the spectrum of $\ul{1}(K_n)$ (see Example 2.3 in \cite{bgp}), the full set of triangles $\T$ of $K_n$ satisfies $\lambda(\T)=n.$ This shows that $\phi\left(\binom{n}{3}\right) \geq n.$ On the other hand, if $|\T|=\binom{n}{3}$ and $\lambda(\T)>n-1,$ then, by Theorem \ref{thm:rigidity-lambda-at-C(n,3)}, $\T$ must be the full triangle set of $K_n,$ and so, $\lambda(\T)=n.$ This proves that $\phi\left(\binom{n}{3}\right)=n.$ Furthermore, by Theorem \ref{thm:rigidity-lambda-at-C(n,3)}, we have $\phi(t) < n$ whenever $t < \binom{n}{3}.$ This proves the second statement.
\end{proof}

We have the two following results as immediate corollaries.

\begin{theorem} \label{thm:exact-formula-Lambda}
    Let $t \geq 3$ be a positive integer. Then,
    \begin{gather*}
        \Lambda(t) = \max \left \{n \in \N : \binom{n}{3} \leq t\right\},
    \end{gather*}
    or, equivalently,
    \begin{gather*}
        \text{for every } n \in \N, \ \binom{n}{3} \leq t < \binom{n+1}{3} \implies \Lambda(t)=n.
    \end{gather*}
    Consequently,
    \begin{gather*}
        \Lambda(t) = \Theta(t^{\frac{1}{3}}).
    \end{gather*}
\end{theorem}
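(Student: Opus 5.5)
Set $N(t):=\max\{n \in \N : \binom{n}{3} \leq t\}$. Since $t \geq 3 \geq 1 = \binom{3}{3}$ and $\binom{n}{3} \to \infty$, the set is nonempty and bounded, so $N(t) \geq 3$ is well defined and satisfies $\binom{N(t)}{3} \leq t < \binom{N(t)+1}{3}$. I will prove $\Lambda(t)=N(t)$ by matching lower and upper bounds, both of which follow directly from Corollary \ref{cor:phi-n} and Theorem \ref{thm:rigidity-lambda-at-C(n,3)}.

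For the lower bound, let $n=N(t)$ and $s=\binom{n}{3}$. Then $1 \leq s \leq t$. By Corollary \ref{cor:phi-n}, $\phi(s)=n$, witnessed by the full triangle set of $K_n$. Therefore $\Lambda(t) \geq \phi(s) = n$.

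For the upper bound, take any $s$ with $1 \leq s \leq t$ and any $\T$ with $|\T|=s$. Then $|\T| \leq t < \binom{n+1}{3}$. Applying the first statement of Theorem \ref{thm:rigidity-lambda-at-C(n,3)} with $n+1 \geq 4$ in place of $n$ gives $\lambda(\T) \leq n$. Hence $\phi(s) \leq n$ for every such $s$, and so $\Lambda(t) \leq n$.

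The equivalent interval formulation is just a restatement: if $\binom{n}{3} \leq t < \binom{n+1}{3}$, then $n = N(t)$, because $\binom{m}{3}$ is strictly increasing for $m \geq 2$.

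For the growth rate, use $\binom{n}{3} \leq t$ and $n \geq 3$. Since $\binom{n}{3} \geq (n-2)^3/6$, we get $n \leq (6t)^{1/3}+2$. In the other direction, $t < \binom{n+1}{3} \leq (n+1)^3/6$ gives $n > (6t)^{1/3}-1$. Both bounds are $\Theta(t^{1/3})$. The lower bound is positive in a uniform way: because $n \geq 3$, $n \geq \max\{3,(6t)^{1/3}-1\} \geq c\,t^{1/3}$ for a suitable absolute constant $c > 0$.

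None of the steps is a real obstacle. The only points needing care are small bookkeeping checks: $t \geq 3$ ensures $N(t) \geq 3$, so Corollary \ref{cor:phi-n} (which requires $n \geq 3$) applies; and the shifted application of Theorem \ref{thm:rigidity-lambda-at-C(n,3)} at $n+1$ is valid.
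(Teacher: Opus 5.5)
Your proof is correct and is essentially the paper's argument. The lower bound $\Lambda(t)\geq \phi\bigl(\binom{n}{3}\bigr)=n$ comes from Corollary~\ref{cor:phi-n}, and the upper bound comes from applying the first part of Theorem~\ref{thm:rigidity-lambda-at-C(n,3)} with $n+1$ in place of $n$; your explicit $\Theta(t^{1/3})$ estimates also agree with those the paper records in the remark following the theorem.
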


\begin{proof}
    It suffices to prove the first two statements. In Corollary \ref{cor:phi-n}, we have proved that $\min\{|\T| : \lambda(\T) \geq n\} = \binom{n}{3}$ and that $\phi\left(\binom{n}{3}\right)=n.$ This proves the first statement. Now, applying Theorem \ref{thm:rigidity-lambda-at-C(n,3)} with $n+1,$ we have that $\lambda(\T)\leq n$ whenever $|\T|< \binom{n+1}{3}.$ Therefore, if $t < \binom{n+1}{3},$ then $\Lambda(t) \leq n.$ On the other hand, $\phi\left(\binom{n}{3}\right)=n,$ and so, $\Lambda(t)\geq n$ whenever $t \geq \binom{n}{3}.$ Thus, if $\binom{n}{3} \leq t < \binom{n+1}{3},$ then $\Lambda(t)=n.$ 
\end{proof}

\begin{rk}
    Let $t \geq 1.$ If $n$ is the unique positive integer such that $\binom{n}{3}\leq t < \binom{n+1}{3},$ then by Theorem \ref{thm:exact-formula-Lambda} and a simple algebraic manipulation, we obtain:
    \begin{gather*}
        (6t)^{\frac{1}{3}}-1 \leq n \leq \Lambda(t) < n + 1< (6t)^{\frac{1}{3}}+3, \text{ or, } (6t)^{\frac{1}{3}}-1 \leq \Lambda(t) < (6t)^{\frac{1}{3}}+3.
    \end{gather*}
\end{rk}

\subsection{\texorpdfstring{Forbidden intervals and nonmonotonicity $\phi$}{Forbidden intervals and nonmonotonicity phi}}
\label{subsec:compression-rigidity-above-clique-threshold}

In this subsection, we study the exact budget extremal function 
\begin{gather*}
    \phi(t) = \max_{|\T|=t} \lambda(\T).
\end{gather*}
inside the interval $\binom{n}{3} < t < \binom{n+1}{3}.$ We first observe that for all sufficiently large positive integer $n$, there is a genuine forbidden interval on which $\phi(t) \leq (n-1),$ and hence $\phi$ is nonmonotone. Recall that every positive integer $t \in  \left(\binom{n}{3}, \binom{n+1}{3}\right)$ may be written uniquely in one (and only one) of the following forms:
\begin{gather*}
    t = \binom{n}{3} + \binom{s}{2}, \quad 2 \leq s \leq n-1, \text{ or, }\\
    t = \binom{n}{3} + \binom{s}{2} + p, \quad 1 \leq p < s \leq n.
\end{gather*}

Theorem \ref{thm:rigidity-lambda-at-C(n,3)} identifies the exact first budget where level $n$ can be reached. The next result shows that this first appearance is isolated rather than persistent: immediately above $\binom{n}{3},$ there is an interval on which $\phi$ is strictly below $n,$ in fact, on this interval, $\phi \leq n-1.$

\begin{theorem} \label{thm:nonmononotinicity-phi}
    Let $n \geq 3$ be a positive integer, and $m \geq 1$ be the largest positive integer such that $\frac{3}{2}(m-1)(m+2) < \binom{n-1}{2}.$ Then, for any $\binom{n}{3}+1 \leq t \leq \binom{n}{3}+\binom{m+1}{2}-1$, we have $\phi(t) \leq n-1.$
\end{theorem}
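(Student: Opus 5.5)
The plan is a counting argument combining Theorem \ref{thm:large-min-degree-overlap} with Lemma \ref{lemma:vertex-number-bound-by-lambda-t}, in the same spirit as the proof of Theorem \ref{thm:rigidity-lambda-at-C(n,3)}. Fix $t$ in the stated range and a family $\T$ with $|\T|=t$. Assume, for contradiction, that $\lambda(\T)>n-1$, and set $N:=\lceil\lambda(\T)\rceil \geq n$. Since $n\geq 3$, we have $\lambda(\T)>2$, so Lemma \ref{lemma:vertex-number-bound-by-lambda-t} applies. Let $v=|V(\Gamma_0(\T))|$.

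The lower bound comes from Theorem \ref{thm:large-min-degree-overlap}(d), which gives $v\geq N\geq n$. For the upper bound, Lemma \ref{lemma:vertex-number-bound-by-lambda-t} together with $N\geq n$ gives
\begin{gather*}
v \leq \frac{6t}{(N-1)(N-2)} \leq \frac{6t}{(n-1)(n-2)} = n + \frac{3k}{\binom{n-1}{2}},
\end{gather*}
where $k:=t-\binom{n}{3}$ and we used $\frac{6\binom{n}{3}}{(n-1)(n-2)}=n$. Working with a general $N\geq n$ like this avoids first having to show separately that $\lambda(\T)\leq n$.

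The key step is to check that the hypothesis on $m$ is exactly what makes the correction term smaller than $1$. The identity $\binom{m+1}{2}-1=\frac{(m-1)(m+2)}{2}$ gives
\begin{gather*}
3k \leq 3\left(\binom{m+1}{2}-1\right)=\tfrac{3}{2}(m-1)(m+2)<\binom{n-1}{2}.
\end{gather*}
Hence $v<n+1$. Combined with $v\geq n$, this forces $v=n$.

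To finish, note that $\T$ then consists of distinct $3$-subsets of an $n$-element vertex set, so $t\leq\binom{n}{3}$. This contradicts $t\geq\binom{n}{3}+1$, and therefore $\lambda(\T)\leq n-1$ for every such $\T$, i.e.\ $\phi(t)\leq n-1$. I expect no real obstacle here. The only points needing care are the algebraic identification of $\frac{3}{2}(m-1)(m+2)$ with $3\bigl(\binom{m+1}{2}-1\bigr)$ and the monotonicity of $6t/((N-1)(N-2))$ in $N$. If the interval is empty (for instance when $m=1$), the statement is vacuous.
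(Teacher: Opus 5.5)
Your proof is correct and takes essentially the same route as the paper: assume $\lceil\lambda(\T)\rceil\ge n$, use Lemma \ref{lemma:vertex-number-bound-by-lambda-t} to get $|V(\Gamma_0(\T))|\le 6t/((n-1)(n-2))<n+1$, and contradict $t>\binom{n}{3}$. The only difference is that you bound $t-\binom{n}{3}\le\binom{m+1}{2}-1=\frac{1}{2}(m-1)(m+2)$ directly, which makes the paper's case split on the forms $t=\binom{n}{3}+\binom{s}{2}$ and $t=\binom{n}{3}+\binom{s}{2}+p$ unnecessary.
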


\begin{proof}
    Fix $\binom{n}{3}+1 \leq t \leq \binom{n}{3}+\binom{m+1}{2}-1$, and assume on the contrary that there exists $\T$ with $|\T|=t$ and $\lambda(\T) > n-1.$ Then, $\lceil \lambda(\T) \rceil \geq n.$ Let $G=\Gamma_0(\T)$ and $v=|V(G)|.$ Since the number of triangles in $G$ is greater than $\binom{n}{3}$, we have $v \geq n+1.$ By Lemma \ref{lemma:vertex-number-bound-by-lambda-t}, we have
    \begin{gather*}
        v \leq \frac{6t}{(\lceil \lambda(\T) \rceil-1)(\lceil \lambda(\T) \rceil-2)} \leq \frac{6t}{(n-1)(n-2)}.
    \end{gather*}
    Note that 
    \begin{gather*}
        \frac{1}{2}(m-1)(m-2) \leq \frac{3}{2}(m-1)(m+2) < \binom{n-1}{2} \implies m \leq n-1.
    \end{gather*}
    Consequently, $t \leq \binom{n}{3}+\binom{n}{2}-1 < \binom{n+1}{3}.$ Thus, 
    \begin{enumerate}
        \item either $t = \binom{n}{3}+\binom{s}{2}$ for some $2 \leq s \leq n-1,$ or,
        \item  $t = \binom{n}{3}+\binom{s}{2}+p$ for some $1 \leq p < s \leq n-1.$
    \end{enumerate}
    Consider the first case. In this case, we further have $s \leq m$ since $t < \binom{n}{3}+\binom{m+1}{2}.$ In particular,
    \begin{gather*}
        6 \binom{s}{2} \leq 6 \binom{m}{2} \leq 3 (m-1)(m+2)= 2 \cdot \frac{3}{2} (m-1)(m+2) < 2\binom{n-1}{2}=(n-1)(n-2).
    \end{gather*}
    Thus, 
    \begin{gather*}
        n+1 \leq v \leq \frac{6t}{(n-1)(n-2)} = \frac{6\left( \binom{n}{3}+\binom{s}{2}\right)}{(n-1)(n-2)}=n + \frac{6\binom{s}{2}}{(n-1)(n-2)} < n+1,
    \end{gather*}
    a contradiction. Thus, $t$ cannot be of this form.\par 
    
    Consider now the case $t = \binom{n}{3}+\binom{s}{2}+p$ for some $1 \leq p < s \leq n-1.$ In particular, $s \leq m$ and $t \leq \binom{n}{3}+\binom{s}{2}+(s-1) \leq \binom{n}{3}+\binom{m}{2}+(m-1).$ We then have:
    \begin{gather*}
        6\left(\binom{m}{2}+(m-1)\right)=3(m-1)(m+2)< 2\binom{n-1}{2}=(n-1)(n-2).
    \end{gather*}
    Thus,
     \begin{gather*}
        n+1 \leq v \leq \frac{6t}{(n-1)(n-2)} \leq \frac{6\binom{n}{3}+3(m-1)(m+2)}{(n-1)(n-2)} = n + \frac{3(m-1)(m+2)}{(n-1)(n-2)} < n+1,
    \end{gather*}
    a contradiction. Thus, we conclude $t$ cannot have this form either. The proof concludes.
\end{proof}

\begin{corollary}
    For $n \geq 6,$ we have 
    \begin{gather*}
        \phi\left(\binom{n}{3}+1\right) \leq n-1.
    \end{gather*}
\end{corollary}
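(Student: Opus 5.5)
The plan is to deduce this from Theorem \ref{thm:nonmononotinicity-phi}. It suffices to show that for $n \geq 6$ the integer $m$ defined there satisfies $m \geq 2$. Then the interval $\binom{n}{3}+1 \leq t \leq \binom{n}{3}+\binom{m+1}{2}-1$ contains $t=\binom{n}{3}+1$, since $\binom{m+1}{2}-1 \geq \binom{3}{2}-1 = 2 \geq 1$, and the theorem gives $\phi\left(\binom{n}{3}+1\right) \leq n-1$ directly.

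To verify $m \geq 2$, I would check that $m=2$ satisfies the defining inequality. For $m=2$ we get $\frac{3}{2}(m-1)(m+2) = \frac{3}{2}\cdot 1\cdot 4 = 6$. For $n \geq 6$ we have $\binom{n-1}{2} \geq \binom{5}{2} = 10 > 6$. So $m=2$ is admissible, and because $m$ is the largest admissible integer, $m \geq 2$. There is a small well-definedness point: the set of admissible $m$ is finite, since the left side grows quadratically in $m$. So the maximum exists, as already implicitly used in the theorem.

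The corollary is a direct specialization, so I expect no real obstacle. The only care needed is the threshold. At $n=5$, $\binom{4}{2}=6$ is not strictly greater than $6$, so $m=1$, the interval is empty, and the theorem says nothing. This explains the hypothesis $n \geq 6$.

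If one wants a self-contained check, the same counting also works directly. For $t=\binom{n}{3}+1$ with $\lambda(\T)>n-1$, Lemma \ref{lemma:vertex-number-bound-by-lambda-t} gives $v \leq n + \frac{6}{(n-1)(n-2)} < n+1$ once $(n-1)(n-2) > 6$. On the other hand, $v \geq n+1$, because more than $\binom{n}{3}$ triangles cannot fit on $n$ vertices. These contradict each other. Note that $(n-1)(n-2)>6$ already holds for $n \geq 5$, but the stated $n \geq 6$ matches the corollary's dependence on Theorem \ref{thm:nonmononotinicity-phi}.
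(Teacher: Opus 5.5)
Your proposal is correct and is the paper's own derivation: the corollary is obtained from Theorem \ref{thm:nonmononotinicity-phi} by noting that for $n \geq 6$ one has $\binom{n-1}{2} \geq 10 > 6 = \frac{3}{2}(2-1)(2+2)$, so $m \geq 2$ and $t=\binom{n}{3}+1$ lies in the forbidden interval. Your side remark is also correct: the direct counting via Lemma \ref{lemma:vertex-number-bound-by-lambda-t} already yields the bound for $n \geq 5$.
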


\begin{prop}
    Let $n \geq 9$, and let $\binom{n}{3} < |\T| < \binom{n+1}{3}$ and $\lambda(\T) > n-1.$ Then, $n+1 \leq |V(\Gamma_0(\T))| \leq n+3.$
\end{prop}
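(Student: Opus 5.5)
The plan is to follow the template of Theorem \ref{thm:nonmononotinicity-phi}: squeeze $v=|V(\Gamma_0(\T))|$ between the clique count from below and Lemma \ref{lemma:vertex-number-bound-by-lambda-t} from above.

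\emph{Lower bound.} Since $\lambda(\T)>n-1$, we have $\lceil\lambda(\T)\rceil\geq n\geq 9>2$, so Lemma \ref{lemma:vertex-number-bound-by-lambda-t} applies. Every triangle of $\T$ is a triangle of $G=\Gamma_0(\T)$, and $G$ has $v$ vertices. Hence $\binom{n}{3}<|\T|\leq\binom{v}{3}$, which forces $v\geq n+1$.

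\emph{Upper bound.} Lemma \ref{lemma:vertex-number-bound-by-lambda-t}, together with $\lceil\lambda\rceil\geq n$, gives
\begin{gather*}
v\leq\frac{6|\T|}{(n-1)(n-2)}<\frac{6\binom{n+1}{3}}{(n-1)(n-2)}=\frac{(n+1)n}{n-2}.
\end{gather*}
Writing $(n+1)n=(n-2)(n+3)+6$, this becomes
\begin{gather*}
v<n+3+\frac{6}{n-2}.
\end{gather*}
For $n\geq 9$ we have $\frac{6}{n-2}\leq\frac67<1$, so $v<n+4$. Since $v$ is an integer, $v\leq n+3$.

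There is no real obstacle. The only care needed is the strict inequality $|\T|<\binom{n+1}{3}$, which gives $v<n+4$ rather than $v\leq n+4$. Note that $\frac{6}{n-2}<1$ already holds for $n\geq 9$ (indeed for $n\geq 9$ the bound is comfortable), which is consistent with the stated hypothesis.
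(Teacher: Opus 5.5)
Your proposal is correct and follows the paper's proof essentially verbatim: the lower bound $v \geq n+1$ from $\binom{n}{3} < |\T| \leq \binom{v}{3}$ (which the paper leaves implicit), and the upper bound from Lemma \ref{lemma:vertex-number-bound-by-lambda-t} with $\lceil\lambda(\T)\rceil \geq n$, giving $v < n+3+\frac{6}{n-2} < n+4$. Your explicit identity $(n+1)n=(n-2)(n+3)+6$ is a nice touch. One small remark: the threshold is tight rather than comfortable, since $\frac{6}{n-2}<1$ holds exactly when $n \geq 9$.
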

\begin{proof}
    By Lemma \ref{lemma:vertex-number-bound-by-lambda-t} and the hypothesis $|\T| > \binom{n}{3}$, we have:
    \begin{gather*}
        n+1 \leq |V(\Gamma_0(\T))| \leq \frac{6|\T|}{(\lceil \lambda(\T) \rceil-1)(\lceil \lambda(\T) \rceil-2)}< \frac{6\binom{n+1}{3}}{(n-1)(n-2)} = (n+3) + \frac{6}{n-2} < n+4.
    \end{gather*}
    The proof follows.
\end{proof}

\begin{corollary}
    The function $\phi$ is nonmonotone. Indeed, for every positive integer $n \geq 6$, if we set
    \begin{gather*}
        n_1 = \binom{n}{3}, \quad n_2 = \binom{n}{3}+1, \quad  n_3 = \binom{n+1}{3},
    \end{gather*}
    then $n_1 < n_2 < n_3,$ and
    \begin{gather*}
        \phi(n_1)=n,\quad  \phi(n_2) \leq n-1, \quad  \phi(n_3)=n+1.
    \end{gather*}
    Hence,
    \begin{gather*}
        \phi(n_2) < \phi(n_1) < \phi(n_3).
    \end{gather*}
\end{corollary}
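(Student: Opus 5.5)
The corollary combines three facts already available, so the plan is to assemble them in order.

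First, the strict inequalities $n_1<n_2<n_3$. The relation $n_1<n_2$ is trivial. For $n_2<n_3$, note that $\binom{n+1}{3}-\binom{n}{3}=\binom{n}{2}\geq 15>1$ for $n\geq 6$.

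Second, the values of $\phi$ at $n_1$ and $n_3$. By Corollary \ref{cor:phi-n} applied with $n$ we get $\phi(n_1)=n$, and applied with $n+1$ we get $\phi(n_3)=n+1$. Both applications are valid since $n,n+1\geq 3$.

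Third, the bound $\phi(n_2)\leq n-1$. This is exactly the preceding corollary, valid for $n\geq 6$. If one wants to derive it from Theorem \ref{thm:nonmononotinicity-phi}, the only point to check is that the interval $\binom{n}{3}+1\leq t\leq \binom{n}{3}+\binom{m+1}{2}-1$ contains $t=\binom{n}{3}+1$. This requires $\binom{m+1}{2}\geq 2$, that is $m\geq 2$, which holds iff $\tfrac{3}{2}\cdot 1\cdot 4=6<\binom{n-1}{2}$, i.e.\ $n-1\geq 5$, i.e.\ $n\geq 6$. This is the only place the hypothesis $n\geq 6$ enters, and it is the step needing the most care, though it is routine.

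Finally, chaining these gives $\phi(n_2)\leq n-1<n=\phi(n_1)<n+1=\phi(n_3)$. Thus $\phi$ strictly decreases from $n_1$ to $n_2$ and then strictly increases to $n_3$, so $\phi$ is neither nonincreasing nor nondecreasing, and hence is nonmonotone.
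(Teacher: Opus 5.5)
Your proposal is correct and follows the paper's implicit argument: the corollary combines Corollary~\ref{cor:phi-n}, applied at $n$ and at $n+1$, with the preceding corollary $\phi(\binom{n}{3}+1)\le n-1$, which comes from Theorem~\ref{thm:nonmononotinicity-phi}. Your check that $t=\binom{n}{3}+1$ lies in that theorem's forbidden interval exactly when $m\ge 2$, i.e.\ $6<\binom{n-1}{2}$, i.e.\ $n\ge 6$, correctly supplies the one detail the paper leaves unstated.
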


\begin{defn}
    Let $n \geq 3$ be a positive integer. Denote:
    \begin{gather*}
        H(n)=\min\left\{s \in \N : \phi\left(\binom{n}{3}+s\right) > n-1 \right\}.
    \end{gather*}
\end{defn}

By Corollary \ref{cor:phi-n}, $H(n) \leq \binom{n+1}{3}-\binom{n}{3}=\binom{n}{2}.$ Thus, $H(n)$ is defined and $H(n) = O(n^2).$

\begin{theorem}
    For all sufficiently large positive integers $n$, we have:
    \begin{gather*}
        \frac{n^2}{6}-\frac{5n}{2\sqrt{3}}+3 < H(n) \leq \binom{n}{2}.
    \end{gather*}
    Thus,
    \begin{gather*}
        H(n) = \Theta\left(\binom{n+1}{3}-\binom{n}{3}\right)= \Theta(n^2).
    \end{gather*}
\end{theorem}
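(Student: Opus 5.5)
The upper bound $H(n)\le\binom{n}{2}$ is already recorded after the definition of $H(n)$. It follows from Corollary \ref{cor:phi-n}: $\phi\left(\binom{n+1}{3}\right)=n+1>n-1$ and $\binom{n+1}{3}-\binom{n}{3}=\binom{n}{2}$. So the work is in the lower bound, and I would obtain it directly from Theorem \ref{thm:nonmononotinicity-phi}.

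That theorem shows $\phi(t)\le n-1$ for every $t$ with $\binom{n}{3}+1\le t\le\binom{n}{3}+\binom{m+1}{2}-1$. Here $m$ is the largest positive integer with $\frac32(m-1)(m+2)<\binom{n-1}{2}$. Hence every $s$ with $1\le s\le\binom{m+1}{2}-1$ satisfies $\phi\left(\binom{n}{3}+s\right)\le n-1$, and so
\begin{gather*}
H(n)\ge\binom{m+1}{2}.
\end{gather*}
It then remains to bound $m$ from below explicitly in terms of $n$.

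By maximality, $m+1$ fails the defining inequality:
\begin{gather*}
\tfrac32 m(m+3)\ge\binom{n-1}{2}=\tfrac12(n-1)(n-2),
\end{gather*}
that is, $3m^2+9m\ge(n-1)(n-2)$. Solving the quadratic gives
\begin{gather*}
m\ge\frac{-9+\sqrt{81+12(n-1)(n-2)}}{6}.
\end{gather*}
Since $12(n-1)(n-2)+81=12n^2-36n+105\ge 12\left(n-\tfrac32\right)^2$, we get $\sqrt{\cdot}\ge 2\sqrt3\,(n-\tfrac32)$. Therefore
\begin{gather*}
m\ge\frac{n}{\sqrt3}-\frac{\sqrt3}{2}-\frac32=:\mu.
\end{gather*}
Substituting into $\binom{m+1}{2}=\frac{m(m+1)}2$, which is increasing in $m$, gives
\begin{gather*}
H(n)\ge\tfrac12\mu(\mu+1)=\frac{n^2}{6}+c_1n+c_0
\end{gather*}
with explicit constants $c_1,c_0$.

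The main obstacle is matching the exact form $\frac{n^2}{6}-\frac{5n}{2\sqrt3}+3$, with strict inequality, for large $n$. My crude estimate of the square root gives a linear coefficient of about $-\frac{\sqrt3+1}{2\sqrt3}\approx -0.79$, whereas $\frac{5}{2\sqrt3}\approx1.44$. So my bound is in fact stronger in the linear term, and the stated inequality follows once $n$ is large enough that the linear gap absorbs the constant terms. If the constants are tighter than expected, I would fall back on the weaker but cleaner bounds $m\ge \frac{n}{\sqrt3}-3$ and $\binom{m+1}{2}\ge\frac{(m+1)^2}{2}-\frac{m+1}{2}$, and expand. Either way the leading term is $n^2/6$, so the comparison reduces to a linear-in-$n$ inequality valid for large $n$.

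Finally, since $\binom{n+1}{3}-\binom{n}{3}=\binom{n}{2}=\Theta(n^2)$, the two bounds give $H(n)=\Theta(n^2)=\Theta\left(\binom{n+1}{3}-\binom{n}{3}\right)$.
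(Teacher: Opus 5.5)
Your proposal is correct and follows the paper's route. Both arguments obtain $H(n)\ge\binom{m+1}{2}$ from Theorem~\ref{thm:nonmononotinicity-phi} and then bound $m$ from below. The only difference is that the paper exhibits the explicit witness $\left\lfloor \frac{n}{\sqrt3}-2\right\rfloor$ satisfying the defining inequality, which gives exactly $\frac{n^2}{6}-\frac{5n}{2\sqrt3}+3$, and then gets strictness from irrationality. You instead use the failure of the inequality at $m+1$ and solve the quadratic, which gives a slightly better bound and strictness directly.

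There is one harmless arithmetic slip. With $\mu=\frac{n}{\sqrt3}-\frac{3+\sqrt3}{2}$, the linear coefficient of $\frac12\mu(\mu+1)$ is $-\left(\frac{1}{\sqrt3}+\frac12\right)\approx-1.08$, not $\approx-0.79$. This still exceeds $-\frac{5}{2\sqrt3}\approx-1.44$. Indeed $\frac12\mu(\mu+1)-\left(\frac{n^2}{6}-\frac{5n}{2\sqrt3}+3\right)=\frac{\sqrt3-1}{2}\,n+\frac{\sqrt3}{2}-\frac94$, which is positive for all $n\ge 4$.
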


\begin{proof}
    Denote $m = \left \lfloor \frac{n}{\sqrt{3}}-2 \right \rfloor.$ Then,
    \begin{gather*}
        \frac{3}{2}(m-1)(m+2) \leq \frac{3}{2} \left(\frac{n}{\sqrt{3}}-3\right) \left(\frac{n}{\sqrt{3}}\right)=\frac{n^2}{2}-\frac{3\sqrt{3}n}{2}<\binom{n-1}{2}.
    \end{gather*}
    Thus, $m$ satisfies the inequality in Theorem \ref{thm:nonmononotinicity-phi}, and thus, $H(n) \geq \binom{m+1}{2}.$ Furthermore, 
    \begin{gather*}
        \binom{m+1}{2}=\frac{1}{2}m(m+1) \geq \frac{1}{2}\left( \frac{n}{\sqrt{3}}-3 \right)\left( \frac{n}{\sqrt{3}}-2 \right)= \frac{n^2}{6}-\frac{5n}{2\sqrt{3}}+3.
    \end{gather*}
    Thus,
    \begin{gather*}
        H(n) \geq \binom{m+1}{2} \geq \frac{n^2}{6}-\frac{5n}{2\sqrt{3}}+3.
    \end{gather*}
    However, since the right hand side is irrational, the inequality above is strict. This proves the theorem.
\end{proof}

\subsection{\texorpdfstring{The $2$-down Laplacian Spectrum of $K_c \vee \overline{K_b}$}{}}
\label{subsec:spectrum-glued-cliques}

In this section, we explicitly compute the spectrum of the graph $K_c \vee \overline{K_b}$ constructed by joining $b$ pairwise disjoint vertices to a copy of $K_c.$ Taking $\T$ to be the full triangle family of this graph, we thus obtain $\lambda(\T).$ This explicit value will be crucial in our discussions in the next section. In this section, we will identify the edges and triangles in a graph $G$ with the corresponding standard basis vector, when the ambient vector space is clear.

\begin{defn}
    Let $c \geq 3, b \geq 1$ be positive integers. Denote by $G_{c,b}$ the graph on vertex set $\{1,\dots,b+c\},$ and edge set:
    \begin{gather*}
        \big\{\{i,j\} : 1 \leq i < j \leq c \big\}\  \bigcup \ \big\{\{i,j\} : c+1 \leq i \leq c+b, \ 1 \leq j \leq c \big\}.
    \end{gather*}
\end{defn}

\begin{defn}
     Let $c \geq 3, b \geq 1$ be positive integers. Denote by $\T_{c,b}$ the set:
     \begin{gather*}
         \big\{ \{i,j,k\} : \{i,j,k\} \text{ is a } 3-\text{clique in } G_{c,b}\}.
     \end{gather*}
\end{defn}

Our goal in this subsection is to prove the following theorem.
\begin{theorem} \label{thm:spectra-of-G-c-b}
    Let $c \geq 3, b \geq 1$ be positive integers. Let $\dl{2,G_{c,b}}$ denote the clique $2$-down Laplacian of $G_{c,b}.$ Then, the spectrum of $\dl{2,G_{c,b}}$ is given by
    \begin{gather*}
        \begin{pmatrix}
            0 & c & b+c\\
            \binom{c}{3}+(b-1)\binom{c-1}{2} & (b-1)(c-1) & \binom{c}{2}
        \end{pmatrix}.
    \end{gather*}
    where eigenvalues with multiplicity $0$ are omitted.
\end{theorem}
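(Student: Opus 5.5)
The plan is to work on the edge side and transfer to triangles at the end using Theorem~\ref{thm:hodge}(d). Let $G=G_{c,b}$, let $\T=\T_{c,b}$, and let $L_1=\delta_0\delta_0^T+\delta_{1,G}^T\delta_{1,G}$ be the total edge Laplacian of the clique complex. The idea is that $L_1$ is easy to compute entrywise. Also, by Theorem~\ref{thm:hodge}(b),(c), $L_1$ commutes with $\ul{1}$ and with $\dl{1}$. Hence every eigenspace $E_\mu$ of $L_1$ splits orthogonally as $(E_\mu\cap\im\delta_0)\oplus(E_\mu\cap\ker\delta_0^T)$. On the second summand $\ul{1}$ acts as $\mu$, and on the first $\dl{1}$ acts as $\mu$.

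\textbf{Step 1: compute $L_1$ entrywise.} Write $A$ for the edges inside $K_c$ and $B_j=\{\{j,u\}: c+1\le u\le c+b\}$ for $1\le j\le c$.

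The diagonal entry at an edge is $2$ plus the number of triangles containing it. This gives $2+(c-2)+b=b+c$ for edges in $A$ and $2+(c-1)=c+1$ for edges in any $B_j$.

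For two distinct edges sharing a vertex, the $\dl{1}$-entry is $\pm1$. The $\ul{1}$-entry is the opposite sign exactly when the two edges span a triangle of $G$; this is the usual cancellation as in $K_n$. Edges sharing no vertex contribute $0$ to both. So the only surviving off-diagonal entries come from pairs $\{j,u\},\{j,u'\}$ with $u\neq u'$ apex vertices, which are nonadjacent. With the ordering $j<u,u'$, such an entry equals $[\{j,u\}:j][\{j,u'\}:j]=(-1)(-1)=1$.

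Hence $L_1=(b+c)I$ on $\re^A$, and on each $\re^{B_j}$ it is $cI+J_b$. Therefore $E_{b+c}$ has dimension $\binom c2+c$ and $E_c$ has dimension $c(b-1)$, and $L_1$ has no other eigenvalues.

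\textbf{Step 2: identify the $\im\delta_0$ parts.} On $\im\delta_0$, $L_1=\dl{1}$, whose nonzero spectrum equals that of the graph Laplacian of $G=K_c\vee\overline{K_b}$. By the standard join formula this spectrum is $0^{(1)},\,c^{(b-1)},\,(b+c)^{(c)}$. So $\dim(E_c\cap\im\delta_0)=b-1$ and $\dim(E_{b+c}\cap\im\delta_0)=c$.

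\textbf{Step 3: read off $\ul{1}$.} Subtracting, $\ul{1}$ has eigenvalue $c$ with multiplicity $c(b-1)-(b-1)=(b-1)(c-1)$ and eigenvalue $b+c$ with multiplicity $\binom c2+c-c=\binom c2$, and all remaining eigenvalues are $0$.

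As a sanity check, the rank $\binom c2+(b-1)(c-1)$ should equal $|E|-(|V|-1)$, since the clique complex is a cone with apex vertex $1$ and so has $H^1=0$. Indeed $|E|-(|V|-1)=\binom c2+bc-(b+c-1)$, which agrees.

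\textbf{Step 4: transfer to triangles.} By Theorem~\ref{thm:hodge}(d), $\dl{2,G_{c,b}}$ has the same nonzero eigenvalues with the same multiplicities. Its zero multiplicity is $|\T|$ minus the rank, namely $\binom c3+b\binom c2-\binom c2-(b-1)(c-1)=\binom c3+(b-1)\binom{c-1}2$.

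The main obstacle is Step 1, specifically the sign bookkeeping showing the off-diagonal cancellation for pairs spanning a triangle and that the surviving entries are $+1$. If the signs come out as $-1$ instead, the block becomes $cI-J_b$ up to a diagonal $\pm1$ conjugation, and the eigenvalues are unchanged. Step 2 only needs the well-known Laplacian spectrum of a join.
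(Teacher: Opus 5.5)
Your proof is correct, and it takes a genuinely different route from the paper's.

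\textbf{The paper's route.} It verifies eigenvectors directly. It exhibits $(b-1)(c-1)$ independent triangle-vectors $v_{x,y}$ with $L_2^{\downarrow}v_{x,y}=c\,v_{x,y}$, and $\binom{c}{2}$ independent edge-vectors $w_{x,y}$ with $L_1^{\uparrow}w_{x,y}=(b+c)w_{x,y}$. This requires a lengthy sign case analysis, part of which is in the appendix. It then closes with a trace count, $\text{tr}(L_2^{\downarrow})=3|\T|=c(b-1)(c-1)+(b+c)\binom{c}{2}$, so positive semidefiniteness forces every remaining eigenvalue to vanish.

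\textbf{Your route.} You compute the total Hodge Laplacian, where the up/down cancellation for edge pairs spanning a triangle leaves $L_1=(b+c)I\oplus\bigoplus_{j}(cI_b+J_b)$. Your diagonal values $b+c$ and $c+1$ and the surviving $+1$ entries are right. You then split each eigenspace along $\im\delta_0\oplus\ker\delta_0^T$ using the commutation relations, and subtract the join Laplacian spectrum $0^{(1)},c^{(b-1)},(b+c)^{(c)}$.

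\textbf{Comparison.} Your argument needs only a single sign check. It produces the multiplicities exactly, rather than as lower bounds that happen to exhaust the trace. It also explains structurally where $c$ and $b+c$ come from (the $cI_b+J_b$ blocks), and gives $\ker L_1=0$ as a byproduct. Its costs are two:
\begin{enumerate}[(i)]
\item it appeals to the join formula, which is standard and easily rechecked via the complement $K_b\cup\overline{K_c}$;
\item it gives no explicit eigenvectors for $L_2^{\downarrow}$, which the paper's approach supplies.
\end{enumerate}

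\textbf{A small correction.} Your closing hedge about the sign is unnecessary, and as stated it is not right. All off-diagonal entries equal to $-1$ would give $(c+2)I_b-J_b$, which is not conjugate to $cI_b+J_b$. With the paper's orientation the entries are forced to be $[\{j,u\}:j][\{j,u'\}:j]=+1$. Changing the vertex order only conjugates $L_1$ by a diagonal $\pm1$ matrix, so nothing depends on this.
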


Since $\Gamma_0(\T_{c,b})=G_{c,b},$ we have the following immediate corollary.

\begin{corollary} \label{cor:lambda-T-c-b}
    Let $c \geq 3, b \geq 1$ be positive integers. Then,
    \begin{gather*}
        \lambda(\T_{c,b}) = \begin{cases}
            c+1, & b=1\\
            c, & b \geq 2.
        \end{cases}.
    \end{gather*}
\end{corollary}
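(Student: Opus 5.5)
The plan is to read the corollary directly off the spectrum in Theorem \ref{thm:spectra-of-G-c-b}, which we take as given. First we check that $\T_{c,b}$ is exactly the set of $3$-cliques of $G_{c,b}$ and that $\Gamma_0(\T_{c,b})=G_{c,b}$. Every edge of $G_{c,b}$ lies in a triangle: an edge $\{i,j\}$ inside $K_c$ lies in $\{i,j,k\}$ for any third $k\le c$, which exists since $c\ge 3$. An edge $\{i,j\}$ with $i>c$ lies in $\{i,j,k\}$ for any $k\le c$ with $k\neq j$. Every vertex is covered in the same way.

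Hence $\dl{2,G_{c,b}}=\dl{2,\Gamma_0(\T_{c,b}),\T_{c,b}}$. By the discussion in Section \ref{sec:notations-and-background}, $\lambda(\T_{c,b})$ is the smallest positive eigenvalue of this matrix.

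Next we list the positive eigenvalues from Theorem \ref{thm:spectra-of-G-c-b}.

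\emph{Eigenvalue $b+c$.} It has multiplicity $\binom{c}{2}\ge 3>0$, so it is always present.

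\emph{Eigenvalue $c$.} It has multiplicity $(b-1)(c-1)$. Since $c\ge 3$, this multiplicity is zero exactly when $b=1$.

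We then split into cases.

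\emph{Case $b\ge 2$.} Both $c$ and $b+c$ occur as eigenvalues. Since $c<b+c$, the smallest positive eigenvalue is $\lambda(\T_{c,b})=c$.

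\emph{Case $b=1$.} The eigenvalue $c$ is omitted, so the only positive eigenvalue is $b+c=c+1$, and $\lambda(\T_{c,1})=c+1$. As a sanity check, $G_{c,1}\cong K_{c+1}$ and $\T_{c,1}$ is its full triangle set, so this agrees with $\lambda=c+1$ from Example 2.3 in \cite{bgp}, as used in Corollary \ref{cor:phi-n}.

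There is no real obstacle here; all of the difficulty sits in Theorem \ref{thm:spectra-of-G-c-b}. The one point needing care is that a multiplicity-zero eigenvalue must not be counted. That is precisely what separates the case $b=1$ from the case $b\ge 2$.
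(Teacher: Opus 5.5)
Your proposal is correct and matches the paper's argument: the corollary is read directly off the spectrum in Theorem~\ref{thm:spectra-of-G-c-b} using $\Gamma_0(\T_{c,b})=G_{c,b}$. The eigenvalue $c$ drops out exactly when its multiplicity $(b-1)(c-1)$ vanishes, i.e.\ when $b=1$, and your $K_{c+1}$ sanity check is the same remark the paper makes right after the statement.
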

Note that $\T_{c,1}$ is the full set of triangles on a copy of $K_{c+1}$ and so, $\lambda(\T_{c,1})=c+1.$

We split the proof of Theorem \ref{thm:spectra-of-G-c-b} into several propositions.

\begin{prop} \label{prop:eigenvalue-verification-c}
     Let $c \geq 3, b \geq 2$ be positive integers. Let $\dl{2,G_{c,b}}$ denote the clique edge down-Laplacian of $G_{c,b}.$ Then, $c$ is an eigenvalue of $\dl{2,G_{c,b}}$ with multiplicity at least $(b-1)(c-1)$, and $(b-1)(c-1)$ linearly independent eigenvectors of $\dl{2,G_{c,b}}$ associated with the eigenvalue $c$ are given by:
     \begin{gather*}
        \sum_{1 \leq i \leq c, \ i \ne x} ([\{i,x,y\}:\{x,y\}] \cdot \{i,x,y\} - [\{i,x,b+c\}:\{x,b+c\}] \cdot \{i,x,b+c\}),
    \end{gather*}
    for $2 \leq x \leq c,\ c+1 \leq y \leq b+c-1.$
\end{prop}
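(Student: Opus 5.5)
Our plan is to verify the eigenvalue equation directly, writing $\dl{2,G_{c,b}}=\delta_1\delta_1^T$. For an edge $\{x,y\}$ set
\begin{gather*}
    w_{x,y}:=\sum_{1\le i\le c,\ i\ne x}[\{i,x,y\}:\{x,y\}]\,\{i,x,y\}=\delta_1(1_{\{x,y\}}),
\end{gather*}
the coboundary of the edge indicator. Then the proposed eigenvector is $u_{x,y}=w_{x,y}-w_{x,b+c}=\delta_1(1_{\{x,y\}}-1_{\{x,b+c\}})$, and we get
\begin{gather*}
    \dl{2}u_{x,y}=\delta_1\ul{1}\big(1_{\{x,y\}}-1_{\{x,b+c\}}\big).
\end{gather*}
So it suffices to compute $\ul{1}1_{\{x,y\}}$ for a ``spoke'' edge $\{x,y\}$ with $x\le c<y$, and show that
\begin{gather*}
    \ul{1}\big(1_{\{x,y\}}-1_{\{x,b+c\}}\big)=c\big(1_{\{x,y\}}-1_{\{x,b+c\}}\big)+z, \qquad z\in\ker\delta_1 .
\end{gather*}
Applying $\delta_1$ to this identity then gives the eigenvalue $c$.

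\emph{Computing $\ul{1}1_{\{x,y\}}$.} The edge $\{x,y\}$ lies in exactly $c-1$ triangles $\{i,x,y\}$ with $i\le c$, $i\ne x$, since $y$ is adjacent only to $1,\dots,c$. Exactly as in the proof of Theorem~\ref{thm:large-min-degree-overlap}, the vector $\ul{1}1_{\{x,y\}}$ has entry $c-1$ at $\{x,y\}$ and entries $\pm1$ on the edges $\{i,y\}$ and $\{i,x\}$ for $i\ne x$. Subtracting the same expression for $y'=b+c$ has two effects. The clique-edge contributions $\pm 1_{\{i,x\}}$ cancel, provided the signs agree; they do, since $y,b+c$ both exceed every clique vertex, so the relative order pattern is identical. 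What remains is
\begin{gather*}
    (c-1)\big(1_{\{x,y\}}-1_{\{x,b+c\}}\big)+\sum_{i\ne x}\pm\big(1_{\{i,y\}}-1_{\{i,b+c\}}\big).
\end{gather*}

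\emph{Reducing to eigenvalue $c$.} We must show this remainder equals $c(1_{\{x,y\}}-1_{\{x,b+c\}})$ modulo $\ker\delta_1$. The natural tool is $\im\delta_0\subseteq\ker\delta_1$ (Theorem~\ref{thm:hodge}(b)). The vector $\delta_0(1_y-1_{b+c})$ equals $\sum_{j\le c}\pm(1_{\{j,y\}}-1_{\{j,b+c\}})$, and after a sign check it is $(1_{\{x,y\}}-1_{\{x,b+c\}})+\sum_{i\ne x}(\text{same terms})$ with matching signs. Subtracting (or adding) it converts the sum over $i\ne x$ into one extra copy of $1_{\{x,y\}}-1_{\{x,b+c\}}$, which yields coefficient $c$. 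The main obstacle is this sign bookkeeping. Concretely, one must check that the sign of $\{i,y\}$ in $\ul{1}1_{\{x,y\}}$, namely $-[\{i,x,y\}:\{x,y\}][\{i,x,y\}:\{i,y\}]$, agrees, up to one global sign independent of $i$, with $[\{i,y\}:y]=1$. That reduces to the identity $[T:e][T:e']=-[e\cap e' \text{ relation}]$, i.e.\ the standard fact that the off-diagonal up-Laplacian entry between two edges of a triangle equals $-[e:v][e':v]$ for the shared vertex $v$. I would cite this via $\delta_1\delta_0=0$ applied at $T=\{i,x,y\}$, $v=y$.

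\emph{Independence.} For linear independence of the $(b-1)(c-1)$ vectors, observe that $u_{x,y}$ is the only one among them supported on triangles $\{i,x,y\}$ with $y\le b+c-1$. Moreover, for fixed $y$ the vectors with different $x$ have disjoint ``$y$-parts''. Their support on triangles $\{1,x,y\}$ contains $\{1,x,y\}$ for $x\ge2$, and these triangles are distinct. Hence a triangular (in fact diagonal) pattern on the coordinates $\{1,x,y\}$ gives independence, and the nonzero eigenvectors are nonzero by the same coordinate.
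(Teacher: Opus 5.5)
Your proposal is correct, but it verifies the eigen-equation differently from the paper. The paper first computes $\delta_1^T v$ edge by edge; this is exactly your $L_1^{\uparrow}f$ with $f=1_{\{x,y\}}-1_{\{x,b+c\}}$, with entries $c-1$, $-(c-1)$, $-1$ on $\{i,y\}$, $+1$ on $\{i,b+c\}$ and $0$ on $\{i,x\}$. The paper then applies $\delta_1$ and checks $c\,v$ separately on each of six types of triangles. You replace that second round of casework with the identity $L_1^{\uparrow}f=cf-\delta_0(1_y-1_{b+c})$ together with $\delta_1\delta_0=0$. Since $\delta_0^Tf=1_y-1_{b+c}$, your identity says that $f$ is an eigenvector of the total Hodge Laplacian $\delta_0\delta_0^T+L_1^{\uparrow}$ with eigenvalue $c$, and that $\delta_1$ carries it to an eigenvector of $L_2^{\downarrow}$. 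This explains where $c$ comes from. It also applies verbatim to any $f$ with $L_1^{\uparrow}f\equiv cf$ modulo $\im\delta_0$. The paper's route is fully explicit and self-contained, but purely verificational.

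There are two small slips to fix.

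First, the $\{i,y\}$-entry of $L_1^{\uparrow}1_{\{x,y\}}$ is $[T:\{x,y\}][T:\{i,y\}]$, not its negative. By the identity you cite (from $\delta_1\delta_0=0$ at $T=\{i,x,y\}$, $v=y$), it equals $-[\{x,y\}:y][\{i,y\}:y]=-1$ for every $i$. Likewise the $\{i,b+c\}$-entry of $L_1^{\uparrow}1_{\{x,b+c\}}$ is $-1$. So the correct move is to \emph{add} $\delta_0(1_y-1_{b+c})=\sum_{j\le c}(1_{\{j,y\}}-1_{\{j,b+c\}})$. Your hedge ``up to one global sign'' keeps the argument valid, but you should state the sign outright.

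Second, in the independence paragraph, two of your claims are false: that $u_{x,y}$ is the only vector supported on triangles $\{i,x,y\}$, and that the $y$-parts for different $x$ are disjoint. For example, $\{2,3,y\}$ lies in the support of both $u_{2,y}$ and $u_{3,y}$. What is true, and suffices, is that the coordinate $\{1,x,y\}$ appears only in $u_{x,y}$. Indeed, $y\le b+c-1$ forces $y'=y$, and $\{i,x'\}=\{1,x\}$ with $x'\ge 2$ forces $x'=x$. This gives the diagonal pattern, which is exactly the paper's independence argument.
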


\begin{proof}
    Denote $G=G_{c,b}$, $\T=\T_{c,b}$, $\delta_1=\delta_{1,G,\T},$ $\dl{2}=\dl{2,G,\T},$ and $\ul{1}=\ul{1,G,\T}.$ Fix $2 \leq x \leq c$ and $c+1 \leq y \leq b+c-1,$ and denote
    \begin{gather*}
        v_{x,y} := \sum_{1 \leq i \leq c, \ i \ne x} ([\{i,x,y\}:\{x,y\}] \cdot \{i,x,y\} - [\{i,x,b+c\}:\{x,b+c\}] \cdot \{i,x,b+c\}) \in \re^{\T}.
    \end{gather*}
    We prove that $v:=v_{x,y}$ is an eigenvector of $\dl{2}$ with eigenvalue $c.$ For any edge $e$ in $G$, we have:
    \begin{gather*}
        \left(\delta_1^Tv\right)_e=\begin{dcases}
            \sum_{\substack{1 \leq i \leq c,\\ i \ne x}} [\{i,x,y\}:\{x,y\}]^2=(c-1), & e = \{x,y\}\\
            \sum_{\substack{1 \leq i \leq c,\\ i \ne x}} -[\{i,x,b+c\}:\{x,b+c\}]^2=-(c-1), & e = \{x,b+c\}\\
            [\{i,x,y\}:\{x,y\}] \cdot [\{i,x,y\}:\{i,y\}]=-[\{i,x,y\}:\{i,x\}]=-1, & e = \{i,y\};\\
            & i \in [c], i \ne x\\
            -[\{i,x,b+c\}:\{x,b+c\}] \cdot [\{i,x,b+c\}:\{i,b+c\}] & e = \{i,b+c\};\\
            =[\{i,x,b+c\}:\{i,x\}] & i \in [c], i \ne x\\
            =1, & \\
            [\{i,x,y\}:\{x,y\}] \cdot [\{i,x,y\}:\{i,x\}] & e = \{i,x\};\\
             +  [\{i,x,b+c\}:\{i,b+c\}] \cdot [\{i,x,b+c\}:\{i,x\}] & i \in [c], i \ne x\\
            = -[\{i,x,y\}:\{i,y\}] + [\{i,x,b+c\}:\{i,b+c\}] 
            =1-1=0, \\
            0, & \text{otherwise.}
        \end{dcases}
    \end{gather*}
    Thus, for any $T \in \T,$ $\left(\dl{2}v\right)_T=\left( \delta_1\left( \delta_1^Tv\right)\right)_T$ is given by:
    \begin{gather*}
        \begin{dcases}
            0=c(v_{x,y})_T, & T \subset [c]\\
            0=c(v_{x,y})_T, & T=\{i,j,k\}; i,j \in [c],\\
            & c+1 \leq k \leq b+c-1, k \ne y\\
            (c-1)[T:\{x,y\}]-[T:\{i,y\}] & T=\{i,x,y\};i \in [c], i \ne x\\
            =[T:\{x,y\}]((c-1)+[T:\{i,x\}]) & \\
            =[T:\{x,y\}]((c-1)+1)=c(v_{x,y})_T & \\
            -(c-1)[T:\{x,b+c\}]+[T:\{i,b+c\}] & T=\{i,x,b+c\};i \in [c], i \ne x\\
            =[T:\{x,b+c\}](-c+1-[T:\{i,x\}]) & \\
            =[T:\{x,b+c\}](-c+1-1)=c(v_{x,y})_T, & \\
            -[T:\{i,y\}]-[T:\{j,y\}]=0=c(v_{x,y})_T & T=\{i,j,y\}; \{i,j\}\subset [c], i,j \neq x\\
            -[T:\{i,b+c\}]-[T:\{j,b+c\}]=0=c(v_{x,y})_T & T=\{i,j,b+c\}; \{i,j\}\subset [c], i,j \neq x.\\
        \end{dcases}
    \end{gather*}
    Since these cases cover all possible triangles in $G$, we conclude that $v_{x,y}$ is an eigenvector of $\dl{2}$ with eigenvalue $c.$ Furthermore, in $v_{x,y},$ the coordinate indexed by the triangle $\{x,y,1\}$ is nonzero, while this coordinate is $0$ for any $v_{x',y'}$ if $(x',y') \ne (x,y)$. Thus, the constructed $(b-1)(c-1)$ vectors $v_{x,y}$ are linearly independent. This shows that $c$ is an eigenvalue of $\dl{2}$ with multiplicity at least $(b-1)(c-1),$ and the proposition follows.
\end{proof}

\begin{prop} \label{prop:eigenvalue-verification-b+c}
     Let $c \geq 3, b \geq 1$ be positive integers. Let $\ul{1,G_{c,b}}$ denote the clique edge up-Laplacian of $G_{c,b}.$ Then, $b+c$ is an eigenvalue of $\ul{1,G_{c,b}}$, and $\binom{c}{2}$ linearly independent eigenvectors of $\ul{1, G_{c,b}}$ associated with the eigenvalue $b+c$ are given by:
     \begin{gather*}
        \{x,y\} + \frac{1}{b}\sum_{i=c+1}^{b+c} \left(-\{x,i\} + \{y,i\}\right)
    \end{gather*}
    for $x,y \in [c],\ x < y.$
\end{prop}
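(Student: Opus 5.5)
The plan is a direct verification, coordinate by coordinate, that $\ul{1}w_{x,y}=(b+c)\,w_{x,y}$ for each fixed pair $x<y$ in $[c]$, where
\begin{gather*}
w_{x,y}:=\{x,y\}+\tfrac1b\sum_{i=c+1}^{b+c}\left(-\{x,i\}+\{y,i\}\right).
\end{gather*}
This is the same style of computation as in Proposition \ref{prop:eigenvalue-verification-c}. Linear independence of the $\binom{c}{2}$ vectors will then follow from supports. The coordinate of $w_{x,y}$ at the inner edge $\{x,y\}$ equals $1$. For $\{x',y'\}\neq\{x,y\}$, the vector $w_{x',y'}$ is supported on $\{x',y'\}$ and on edges with an endpoint $i\ge c+1$, so its coordinate at the inner edge $\{x,y\}$ is $0$.

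To organize the signs I will use $\ul{1}=\delta_1^T\delta_1$ and the standard local formula for the up-Laplacian of a clique complex. For an edge $e$,
\begin{gather*}
(\ul{1}w)_e=d_e\,w_e+\sum_{f}\;[T:e][T:f]\,w_f,
\end{gather*}
where $d_e$ is the number of triangles containing $e$. The sum runs over edges $f\neq e$ with $e\cup f=T$ a triangle. Each product $[T:e][T:f]$ is $\pm1$, and it is determined by the relative positions of the three vertices of $T$. All vertices $i\ge c+1$ are larger than every vertex of $[c]$, which makes these signs explicit. For example, when $T=\{x,y,i\}$ one has $[T:\{x,y\}]=1$, $[T:\{x,i\}]=-1$ and $[T:\{y,i\}]=1$.

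I will then check the coordinates in five cases, using $d_e=c-2+b$ for inner edges and $d_e=c-1$ for edges meeting the outer set.
\begin{enumerate}[(i)]
\item At $e=\{x,y\}$ the diagonal term gives $c+b-2$. The $c-2$ inner triangles contribute nothing, because the other edges of those triangles have zero weight. Each outer triangle $\{x,y,i\}$ contributes $\tfrac1b\bigl(-[T:\{x,y\}][T:\{x,i\}]+[T:\{x,y\}][T:\{y,i\}]\bigr)=\tfrac2b$. Summing over the $b$ outer triangles adds $2$, for a total of $b+c$.
\item At $e=\{x,i\}$ with $i$ outer, the diagonal term gives $-\tfrac{c-1}{b}$. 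The triangle $\{x,y,i\}$ contributes $[T:\{x,i\}]\bigl([T:\{x,y\}]\cdot 1+[T:\{y,i\}]\cdot\tfrac1b\bigr)=-1-\tfrac1b$. The total is $-\tfrac{b+c}{b}$, as required. The case $\{y,i\}$ is symmetric.
\item At an inner edge $\{x,k\}$ with $k\notin\{x,y\}$, the triangle $\{x,y,k\}$ contributes $\pm1$ through $\{x,y\}$. The $b$ triangles $\{x,k,i\}$ contribute $\pm\tfrac1b$ each through $\{x,i\}$. I expect these two contributions to cancel, and checking this sign is the key point. The edge $\{y,k\}$ is handled the same way.
\item At an outer edge $\{k,i\}$ with $k\notin\{x,y\}$, the contributions of the triangles $\{x,k,i\}$ and $\{y,k,i\}$ are $\mp\tfrac1b$ and should cancel.
\item At all remaining edges the coordinate is visibly $0$.
\end{enumerate}

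The main obstacle is the sign bookkeeping in cases (iii) and (iv). There the relative order of $k$ with respect to $x$ and $y$ varies, and one must confirm that the cancellation holds for every ordering. To handle this I will use the invariance under reordering from Lemma A1 of \cite{cioaba-guo-mim-ji}, or argue directly from the identity $\delta_1\delta_0=0$ of Theorem \ref{thm:hodge}. Concretely, $[T:e][T:f]$ equals minus the product of the signs of the shared vertex in $e$ and in $f$. With that identity, each cancellation reduces to comparing the orientation of $\{x,k\}$ against $\{x,i\}$, independently of where $k$ lies. Once these cases are checked, the eigenvalue $b+c$ and its $\binom{c}{2}$ independent eigenvectors follow.
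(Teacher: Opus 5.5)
Your proposal is correct and is essentially the paper's appendix argument: a coordinate-by-coordinate check that $\ul{1}w_{x,y}=(b+c)w_{x,y}$ over the same edge classes, with linear independence read off from the $\{x,y\}$ coordinate. The only difference is in the sign bookkeeping. The paper splits the edges $\{k,x\}$ and $\{k,y\}$ according to whether $k<x$, $x<k<y$, or $y<k$. Your identity $[T:e][T:f]=-[e:v][f:v]$, which does follow from $(\delta_1\delta_0)_{T,v}=0$, handles all positions of $k$ at once. This is because the factor $[\{x,k\}:x]$ (resp.\ $[\{y,k\}:y]$) is common to both contributions. The cancellation then only compares the orientations at $x$ of $\{x,y\}$ and $\{x,i\}$ (resp.\ at $y$ of $\{x,y\}$ and $\{y,i\}$), rather than $\{x,k\}$ against $\{x,i\}$ as you phrased it.
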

We provide a proof in Appendix \ref{appendix:eigenvalue-verification-b+c}.

Since the nonzero part of the spectra of $\ul{1}$ and $\dl{2}$ coincide, we have the following immediate corollary.

\begin{corollary}
    Let $c \geq 3, b \geq 1$ be positive integers. Let $\dl{2,G_{c,b}}$ denote the clique $2$-down Laplacian of $G_{c,b}.$ Then, $b+c$ is an eigenvalue of $\dl{2,G_{c,b}}$ with multiplicity at least $\binom{c}{2}.$
\end{corollary}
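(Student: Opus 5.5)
The statement follows directly from Proposition \ref{prop:eigenvalue-verification-b+c} and Theorem \ref{thm:hodge}(d), so the plan is a bookkeeping argument about eigenspaces.

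First, Proposition \ref{prop:eigenvalue-verification-b+c} gives $\binom{c}{2}$ linearly independent eigenvectors
\begin{gather*}
w_{x,y}=\{x,y\}+\frac{1}{b}\sum_{i=c+1}^{b+c}\left(-\{x,i\}+\{y,i\}\right), \quad 1\le x<y\le c,
\end{gather*}
of $\ul{1,G_{c,b}}=\delta_1^T\delta_1$, all with eigenvalue $b+c$. The $(b+c)$-eigenspace $E$ of $\ul{1}$ therefore has dimension at least $\binom{c}{2}$. Linear independence is visible directly: the coordinate of $w_{x,y}$ at the edge $\{x,y\}\subset[c]$ equals $1$, while every other $w_{x',y'}$ vanishes there. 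I will simply take it as given by the proposition.

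Next I transfer the multiplicity to $\dl{2}=\delta_1\delta_1^T$. I would not just cite Theorem \ref{thm:hodge}(d) as ``same nonzero spectrum''; instead I would make the multiplicity statement explicit with the standard intertwining argument. Consider the linear map $\delta_1$ restricted to $E$. For $w\in E$ we have
\begin{gather*}
\dl{2}(\delta_1 w)=\delta_1(\delta_1^T\delta_1 w)=(b+c)\,\delta_1 w,
\end{gather*}
so $\delta_1$ maps $E$ into the $(b+c)$-eigenspace of $\dl{2}$. The map is injective on $E$. Indeed, if $\delta_1 w=0$, then $(b+c)w=\delta_1^T\delta_1 w=0$, and since $b+c>0$ this forces $w=0$. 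Hence the $(b+c)$-eigenspace of $\dl{2}$ has dimension at least $\dim E\ge\binom{c}{2}$, which is the claimed multiplicity bound.

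There is no real obstacle here. The only point needing care is that the argument gives equality of \emph{multiplicities}, not merely of the sets of nonzero eigenvalues, and that it uses $b+c\neq 0$. This holds automatically because $c\ge 3$. The vectors $\delta_1 w_{x,y}$ also give an explicit basis of the corresponding eigenvectors of $\dl{2}$, should one be needed later, for instance in completing the proof of Theorem \ref{thm:spectra-of-G-c-b} by a dimension count.
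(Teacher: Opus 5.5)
Your proof is correct and follows the paper's route. The paper derives the corollary immediately from Proposition~\ref{prop:eigenvalue-verification-b+c} together with Theorem~\ref{thm:hodge}(d), that $\delta_1^T\delta_1$ and $\delta_1\delta_1^T$ share their nonzero spectrum; your argument that $\delta_1$ maps the $(b+c)$-eigenspace of $\ul{1}$ injectively into that of $\dl{2}$ just spells out the multiplicity part of that fact. Strictly, it yields only an inequality of multiplicities rather than equality (the reverse injection via $\delta_1^T$ would be needed for equality), but the inequality is all the statement requires.
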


We can now prove Theorem \ref{thm:spectra-of-G-c-b}.

\begin{proof} [Proof of Theorem \ref{thm:spectra-of-G-c-b}]
    If $T \in \T$, we have:
    \begin{gather*}
        \left(\dl{2,G_{c,b}, \T_{c,b}}\right)_{T,T} = 3.
    \end{gather*}
    Thus, 
    \begin{gather*}
        \text{tr}\left(\dl{2,G_{c,b}, \T_{c,b}}\right) = 3|\T|=3\left(\binom{c}{3}+b\binom{c}{2}\right).
    \end{gather*}
    On the other hand,
    \begin{gather*}
        (b-1)(c-1)c+ \binom{c}{2} (b+c) = 3\left(\binom{c}{3}+b\binom{c}{2}\right) = \text{tr}\left(\dl{2,G_{c,b}, \T_{c,b}}\right).
    \end{gather*}
    Note that if $b=1$, then $(b-1)(c-1)c=0$, hence the omitted eigenvalue $c$ does not contribute to the sum in this case. Since $\dl{2,G_{c,b}, \T_{c,b}}$ is positive semidefinite, all remaining eigenvalues must be $0$. We obtain the multiplicity of the $0$ eigenvalue as:
    \begin{gather*}
        |\T|-(b-1)(c-1)-\binom{c}{2} = \binom{c}{3}+b\binom{c}{2}-(b-1)(c-1)- \binom{c}{2}=\binom{c}{3}+(b-1)\binom{c-1}{2}.
    \end{gather*}
    This proves the theorem.
\end{proof}

\subsection{\texorpdfstring{Growth rate of $\phi$}{}}
\label{subsec:growth-rate-phi}

In this subsection, we derive the explicit growth rate of $\phi.$ Despite the highly nonmonotone characteristic of $\phi$, we observe that $\phi$ has the same cubic root growth rate as $\Lambda.$ Denote by $\N_0$ the set of non-negative integers. We need the following number theoretic lemma.

\begin{lemma} \label{lemma:binomial-sum-lemma}
    Let $a \in \N, a \geq 3$. For every positive integer $N \geq 3a\binom{a}{2}+2a(a-1),$ there exist $x,y,z \in \N_0$ such that
    \begin{gather*}
        N = x \binom{a}{2} + y \binom{a+1}{2} + z \binom{a+2}{2}.
    \end{gather*}
    Consequently, for every positive integer $N \geq 2a^3+2a^2+1,$ there exist $x,y,z \in \N$ such that
    \begin{gather*}
        N = \left(\binom{a}{3} + x \binom{a}{2}\right) + \left(\binom{a+1}{3} + y \binom{a+1}{2}\right) + \left(\binom{a+2}{3} + z \binom{a+2}{2}\right).
    \end{gather*}
\end{lemma}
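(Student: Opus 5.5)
The plan is a Frobenius-type argument that exploits the arithmetic relations among the three binomial coefficients. Write $A=\binom{a}{2}$. Then $\binom{a+1}{2}=A+a$ and $\binom{a+2}{2}=A+2a+1$. For $x,y,z\in\N_0$, set $k=x+y+z$. Then
\begin{gather*}
x\binom{a}{2}+y\binom{a+1}{2}+z\binom{a+2}{2}=kA+(y+2z)a+z .
\end{gather*}
So it suffices to find $k\geq 0$ and a decomposition $N-kA=ma+r$ with $0\leq r\leq a-1$, $m\geq 2r$ and $m-r\leq k$. Given such data, we put $z=r$, $y=m-2r\geq 0$ and $x=k-(m-r)\geq 0$, and the identity above recovers $N$.

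To choose $k$, set $M_0=2a(a-1)+(a-1)$. Let $k$ be the largest integer with $M:=N-kA\geq M_0$; this is possible because $N\geq M_0$ for $N$ in the stated range. Then $M_0\leq M<M_0+A$. Divide with remainder, $M=ma+r$ with $0\leq r\leq a-1$. Since $M\geq 2a(a-1)+(a-1)$, we get $m\geq 2(a-1)\geq 2r$. For the remaining constraint, note that $m\leq M/a<(M_0+A)/a=2a-1+\tfrac{a-1}{a}+\tfrac{a-1}{2}$, so $m\leq \tfrac{5a}{2}$ roughly. On the other side, $k\geq (N-M_0-A)/A$. With $N\geq 3aA+2a(a-1)$ this gives $k\geq 3a-1-(a-1)/A$, so $k\geq 3a-2$ for $a\geq 3$. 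Hence $m-r\leq m\leq k$. This last comparison is the step I expect to be tight. I would verify it carefully for small $a$, namely $a=3,4$, by direct computation, and if necessary shift $M_0$ slightly, since the threshold $3a\binom{a}{2}+2a(a-1)$ looks designed to make exactly this inequality work.

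For the \emph{consequently} part, subtract the fixed offset
\begin{gather*}
S=\binom{a}{3}+\binom{a+1}{3}+\binom{a+2}{3}+\binom{a}{2}+\binom{a+1}{2}+\binom{a+2}{2}.
\end{gather*}
Including the last three terms in $S$ forces $x,y,z\geq 1$. Then apply the first part to $N-S$ to obtain $x',y',z'\in\N_0$, and set $x=x'+1$, $y=y'+1$, $z=z'+1$. What remains is the polynomial check $2a^3+2a^2+1-S\geq 3a\binom{a}{2}+2a(a-1)$. Here $S=\binom{a+3}{3}$ plus lower-order terms, of order $a^3/2$, while $3a\binom{a}{2}\approx \tfrac32 a^3$. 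So I expect the inequality to hold comfortably, and I would confirm it by expanding both sides as cubics in $a$ and checking the base case $a=3$ directly.
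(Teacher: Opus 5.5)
Your proof is correct and follows essentially the paper's route: both use $\binom{a+1}{2}=\binom{a}{2}+a$ and $\binom{a+2}{2}=\binom{a}{2}+2a+1$ to reduce the problem to writing $N-k\binom{a}{2}=ya+z(2a+1)$ with $y+z\le k$, and both then subtract the same offset $S$ for the second part; the paper does the reduction via the Frobenius number of $a,2a+1$ and overlapping intervals $I_q$, while you use explicit division by $a$ and a greedy choice of $k$. Contrary to your expectations, the step $m\le k$ is not tight, since your bounds give $m<\frac{5(a-1)}{2}+1\le 3a-2<k$ (in fact $k\ge 3a-1$) and no small-case check is needed, whereas the final polynomial check is tight rather than comfortable: $3a\binom{a}{2}+2a(a-1)+S=2a^3+2a^2+1$ holds with exact equality.
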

% $\binom{a}{3}+\binom{a+1}{3}+\binom{a+2}{3}+a\binom{a}{2}+a(2a-1)$

\begin{proof}
    For $q \in \N$ denote:
    \begin{gather*}
        I_q = \left\{n \in \N : q\binom{a}{2}+2a(a-1) \leq n \leq q\binom{a}{2} + qa\right\}. 
    \end{gather*}
     We claim that if $N \in I_q,$ then, there exist $x,y,z \in \N_0$ with $x+y+z=q$ and
     \begin{gather*}
         N=x\binom{a}{2}+y\binom{a+1}{2}+z\binom{a+2}{2}.
     \end{gather*}
     To prove this claim, first observe that 
     \begin{gather*}
         2a(a-1) \leq N-q\binom{a}{2} \leq qa.
     \end{gather*}
     However, $(a,2a+1)=1,$ and $a(2a+1)-a-(2a+1)+1=2a(a-1).$ By Frobenius, there exist $y,z \in \N_0$ such that
     \begin{gather*}
         N-q\binom{a}{2}=ya + z(2a+1)=y\left(\binom{a+1}{2}-\binom{a}{2} \right) + z\left( \binom{a+2}{2}-\binom{a}{2} \right).
     \end{gather*}
     Or,
     \begin{gather*}
         N=(q-y-z)\binom{a}{2} + y\binom{a+1}{2} + z \binom{a+2}{2}.
     \end{gather*}
     However, since $N-q\binom{a}{2} \leq qa,$ we have:
     \begin{gather*}
         (y+z)a \leq ya + z(2a+1) \leq qa, \text{ or, } y+z \leq q.
     \end{gather*}
     That is, $q-y-z \geq 0.$ Thus, taking $x=q-y-z \in \N_0,$ we have the desired expression of arbitrary $N \in I_q.$

     Now, if $q \geq 3a,$ then,
     \begin{gather*}
         q\binom{a}{2}+2a(a-1) < (q+1)\binom{a}{2}+2a(a-1) \leq q\binom{a}{2}+qa.
     \end{gather*}
     Therefore, for $q \geq 3a,$ we have $I_q \cap I_{q+1} \ne \emptyset.$ Consequently,
     \begin{gather*}
         \bigcup_{q=3a}^{\infty} I_q = \left\{n \in \N: n \geq 3a\binom{a}{2}+2a(a-1)\right\}.
     \end{gather*}
     That is, if $N \geq 3a\binom{a}{2}+2a(a-1),$ then there exist $x,y,z\in \N_0$ such that:
     \begin{gather*}
         N=x\binom{a}{2}+y\binom{a+1}{2}+z\binom{a+2}{2}.
     \end{gather*}
     Consequently, for every 
     \begin{gather*}
      N \geq 3a\binom{a}{2}+2a(a-1)+\sum_{i=0}^{2}\left(\binom{a+i}{3}+\binom{a+i}{2}\right)=2a^3+2a^2+1,
     \end{gather*}
    there exist $x,y,z\in \N$ such that:
     \begin{gather*}
        N = \left(\binom{a}{3} + x \binom{a}{2}\right) + \left(\binom{a+1}{3} + y \binom{a+1}{2}\right) + \left(\binom{a+2}{3} + z \binom{a+2}{2}\right).
    \end{gather*}
\end{proof}

\begin{theorem} \label{thm:phi-growth-rate}
    For every positive integer $a \geq 3,$
    \begin{gather*}
        t \in \N, t \geq 2a^3+2a^2+1 \implies \phi(t) \geq a.
    \end{gather*}
    Consequently, for $t \in \N, t \geq 81,$
    \begin{gather*}
        \phi(t) \geq \left \lfloor \left(\frac{t}{3}\right)^{\frac{1}{3}} \right \rfloor.
    \end{gather*}
    Furthermore, 
    \begin{gather*}
        \phi(t) = \Theta(t^{\frac{1}{3}}).
    \end{gather*}
\end{theorem}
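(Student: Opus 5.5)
The plan is to realize every sufficiently large budget $t$ as a \emph{disjoint union} of three families of the form $\T_{c,b}$, and to control $\lambda$ of the union by the smallest of the three. The basic observation is a block-diagonal one. If $\T=\T^{(1)}\sqcup\T^{(2)}\sqcup\T^{(3)}$ with pairwise vertex-disjoint supports, then no two triangles from different pieces share an edge. Hence $\dl{2,\T}$ is block diagonal with blocks $\dl{2,\T^{(i)}}$, and so
\[
\lambda(\T)=\min_i \lambda(\T^{(i)}).
\]
By Corollary \ref{cor:lambda-T-c-b}, each $\T_{c,b}$ with $c\ge 3$, $b\ge1$ has $\lambda(\T_{c,b})\ge c$. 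Moreover
\[
|\T_{c,b}|=\binom{c}{3}+b\binom{c}{2},
\]
since the graph $G_{c,b}$ has $\binom{c}{3}$ triangles inside $K_c$ and $\binom{c}{2}$ triangles through each of the $b$ outer vertices.

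Fix $a\ge3$ and $t\ge 2a^3+2a^2+1$. Lemma \ref{lemma:binomial-sum-lemma} gives $x,y,z\in\N$, all at least $1$, with
\[
t=|\T_{a,x}|+|\T_{a+1,y}|+|\T_{a+2,z}|.
\]
Take vertex-disjoint copies of these three families and form their union $\T$. Then $|\T|=t$, and
\[
\lambda(\T)=\min\{\lambda(\T_{a,x}),\lambda(\T_{a+1,y}),\lambda(\T_{a+2,z})\}\ge a.
\]
This proves $\phi(t)\ge a$. The step needing most care is the block-diagonal claim: one must check that the positive spectrum of a direct sum is the union of the positive spectra, so the minimal positive eigenvalue is the minimum over the blocks. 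This holds because each block is nonzero, as each piece is nonempty.

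For the explicit bound, set $a=\lfloor (t/3)^{1/3}\rfloor$. When $t\ge81$ we have $a\ge3$. We need
\[
2a^3+2a^2+1\le t.
\]
Since $a^3\le t/3$, it suffices that $2a^3+2a^2+1\le 3a^3$, i.e.\ $a^3\ge 2a^2+1$. This holds for $a\ge3$ because $27\ge19$, and $a^3-2a^2$ is increasing for $a\ge3$. So $\phi(t)\ge\lfloor (t/3)^{1/3}\rfloor$.

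Finally, the growth rate follows by squeezing. From below, $\phi(t)\ge (t/3)^{1/3}-1$. From above, $\phi(t)\le\Lambda(t)<(6t)^{1/3}+3$ by Theorem \ref{thm:exact-formula-Lambda} and the subsequent remark. Together these give $\phi(t)=\Theta(t^{1/3})$. The finitely many small $t$ are irrelevant, since $\phi(t)>0$ for every $t$.
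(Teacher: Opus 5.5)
Your proof is correct and follows essentially the same route as the paper: a vertex-disjoint union of $\T_{a,x}$, $\T_{a+1,y}$, $\T_{a+2,z}$ with sizes supplied by Lemma \ref{lemma:binomial-sum-lemma}, block-diagonality giving $\lambda(\T)\ge a$, the choice $a=\lfloor (t/3)^{1/3}\rfloor$, and the upper bound via $\phi\le\Lambda$. The only differences are cosmetic: you use $\lfloor x\rfloor\ge x-1$ where the paper uses $\lfloor x\rfloor\ge x/2$, and you verify $2a^3+2a^2+1\le 3a^3$ and the nonemptiness of each block explicitly.
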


\begin{proof}
    Let $a, N, x, y, z \in \N, a \geq 3$ be such that 
    \begin{gather*}
        N = \left(\binom{a}{3} + x \binom{a}{2}\right) + \left(\binom{a+1}{3} + y \binom{a+1}{2}\right) + \left(\binom{a+2}{3} + z \binom{a+2}{2}\right).
    \end{gather*}
    Then, take $\T$ to be the disjoint union of the full triangle families $\T_{a,x}, \T_{a+1,y},$ and $\T_{a+2,z}.$ Then,
    \begin{gather*}
        |\T|=\left(\binom{a}{3} + x \binom{a}{2}\right) + \left(\binom{a+1}{3} + y \binom{a+1}{2}\right) + \left(\binom{a+2}{3} + z \binom{a+2}{2}\right)=N.
    \end{gather*}
    Let $G=\Gamma_0(\T)$. Then, $\dl{2,G,\T}$ has a block diagonal form with three blocks, where the blocks correspond to the $2$-down Laplacian matrix of $\Gamma_0(\T_{a,x}),\Gamma_0(\T_{a+1,y}),$ and $\Gamma_0(\T_{a+2,z}).$ Thus,
    \begin{gather*}
        \lambda(\T)=\min\left\{ \lambda(\T_{a,x}), \lambda(\T_{a+1,y}), \lambda(\T_{a+2,z})\right\}\geq \min\{a,a+1,a+2\} \geq a.
    \end{gather*}
    Thus, $\phi(N) \geq a.$
    
    Now, by Lemma \ref{lemma:binomial-sum-lemma}, for fixed $a \in \N, a \geq 3$, for any $t \geq 2a^3+2a^2+1$, there exist $x,y,z \in \N$ with
     \begin{gather*}
        t=\left(\binom{a}{3} + x \binom{a}{2}\right) + \left(\binom{a+1}{3} + y \binom{a+1}{2}\right) + \left(\binom{a+2}{3} + z \binom{a+2}{2}\right).
    \end{gather*}
    By the observation above, $\phi(t) \geq a.$ That is, $\phi(t) \geq a$ whenever $t \geq 2a^3+2a^2+1.$ This proves the first statement. 

    Now, fix $t \in \N, t \geq 81.$ Set $a = \left \lfloor \left(\frac{t}{3}\right)^{\frac{1}{3}} \right \rfloor$. Then,
    \begin{gather*}
        2a^3+2a^2+1 < 3a^3 \leq 3\left(\left(\frac{t}{3}\right)^{\frac{1}{3}}\right)^3=t.
    \end{gather*}
    By the discussions above,
    \begin{gather*}
        \phi(t) \geq a = \left \lfloor \left(\frac{t}{3}\right)^{\frac{1}{3}} \right \rfloor.
    \end{gather*}
    Using the elementary fact that $x \in \re, x \geq 1 \implies \lfloor x \rfloor \geq \frac{x}{2},$ we have that
    \begin{gather*}
        \phi(t) \geq \frac{1}{2} \left(\frac{t}{3}\right)^{\frac{1}{3}}.
    \end{gather*}
    This proves that $\phi(t)=\Omega(t^{\frac{1}{3}}).$ Furthermore, by Theorem \ref{thm:exact-formula-Lambda}, $\Lambda(t)=\Theta(t^{\frac{1}{3}}),$ and $\phi(t) \leq \Lambda(t)$ for every $t$ by definition. Thus, $\phi(t)=O(t^{\frac{1}{3}}).$ We thus conclude that $\phi(t)=\Theta(t^{\frac{1}{3}}).$
\end{proof}

\section{Conclusion}
\label{sec:conclusion}
We studied the smallest positive eigenvalue $\lambda(\T)$ of the restricted edge up-Laplacian of a triangle set $\T$, treating it as a combinatorial parameter of the set itself. We showed that large $\lambda(\T)$ forces strong overlap among the triangles of $\T,$ and forces the support graph $\Gamma_0(\T)$ to have a large minimum degree. We also proved that $\binom{n}{3}$ is the exact threshold for attaining level $n$, with rigidity at equality, and that the threshold is followed by a genuine forbidden interval on which the extremal function $\phi(t)$ remains at most $(n-1).$ Moreover, at the global scale, we determine the exact behavior of the monotone envelope $\Lambda:$
\begin{gather*}
    \Lambda(t) = \max\left\{ n \in \N : \binom{n}{3} \leq t \right\}.
\end{gather*}
Thus, the cubic-root growth law $\Lambda(t) =\Theta(t^{\frac{1}{3}})$ appears as an immediate corollary, while the same growth rate of $\phi(t)=\Theta(t^{\frac{1}{3}})$ required more sophisticated arguments.

Taken together, these results show that $\lambda(\T)$ already supports a meaningful structural and extremal picture: it detects local overlap, forces a support of large minimum degree, exhibits rigid clique thresholds, has a genuine forbidden region immediately above each clique threshold, and has a monotone envelope $\Lambda$ with an exact staircase description.

Two natural problems remain open. First, one would like a more precise description of the extremal function inside each interval $\binom{n}{3} < t < \binom{n+1}{3},$ especially the size and structure of the forbidden region immediately above $\binom{n}{3}.$ Second, one would like a structural description of extremal and near-extremal families: when $\lambda(\T)$ is large relative to $|\T|$, or close to the staircase level $\Lambda(|\T|)$, how close must the support graph $\Gamma_0(\T)$ be to a clique or other rigid dense configuration? These questions indicate that the structural and extremal study of $\lambda$ and $\phi$ is only beginning.

\printbibliography

\newpage 

\appendix

\section{Proof of Proposition \ref{prop:eigenvalue-verification-b+c}} 
\label{appendix:eigenvalue-verification-b+c}

\begin{proof}
    Denote $G=G_{c,b}$, $\T=\T_{c,b}$, $\delta_1=\delta_{1,G,\T},$ $\dl{2}=\dl{2,G,\T},$ and $\ul{1}=\ul{1,G,\T}.$ For $1 \leq x < y \leq c,$ denote
    \begin{gather*}
        w_{x,y} := \{x,y\} + \frac{1}{b}\sum_{i=c+1}^{b+c} \left(-\{x,i\} + \{y,i\}\right) \in \re^{E(G)}.
    \end{gather*}
    We prove that $w_{x,y}$ is an eigenvector $\ul{1}$ with eigenvalue $b+c.$ Fix $1 \leq x < y \leq c$ and denote $w=w_{x,y}$. For any triangle $T \in \T,$ we have:
    \begin{gather*}
        \left( \delta_1w \right)_T=\begin{dcases}
            0, & \  \text{for every edge $e$ of $T$, }w_e=0\\
            % [T:\{x,y\}], & \{x,y\} \subset T \subset [c]\\
            [T:\{x,y\}]+\frac{1}{b}[T:\{i,y\}]-\frac{1}{b}[T:\{i,x\}]& T=\{i,x,y\},\ i \geq c+1\\
            =1 + \frac{2}{b}, & \\
            -\frac{1}{b}[T:\{i,x\}], & T=\{i,x,z\},i \geq c+1, \ z \in [c],\ z \ne y\\
            \frac{1}{b}[T:\{i,y\}], & T=\{i,y,z\},i \geq c+1,\ z \in [c],\ z \ne x\\
            [T:\{x,y\}]=1, & T=\{k,x,y\},\ k \in [c] \setminus \{x,y\},\ k < x \text{ or } y < k\\
            [T:\{x,y\}]=-1, & T=\{k,x,y\},\ k \in [c] \setminus \{x,y\},\ x < k < y.
        \end{dcases}
    \end{gather*}
    Thus, for any edge $e$ in $G$, $\left(\ul{1}w\right)_e = \left(\delta_1^T \left( \delta_1w \right)\right)_e$ is given by:
    \begin{gather*}
        \begin{dcases}
            \sum_{i=c+1}^{b+c}\left(1+\frac{2}{b}\right)[\{i,x,y\}:\{x,y\}] + \sum_{\substack{k=1,\\ k \ne x, y}}^c[\{k,x,y\}:\{x,y\}]^2& e=\{x,y\}\\
            =\left(1+\frac{2}{b}\right)b+1 \cdot (c-2)=b+c=(b+c)w_e, &\\
            [\{k,x,y\}:\{x,y\}][\{k,x,y\}:\{k,x\}]+ \sum_{i=c+1}^{b+c}\left(-\frac{1}{b}\right)[\{i,k,x\}:\{k,x\}] & e = \{k,x\},\ k \in [c] \setminus \{x,y\},\ k< x\\
            =1 - b\frac{1}{b}=0=(b+c)w_e, & \\
            [\{k,x,y\}:\{x,y\}][\{k,x,y\}:\{k,x\}]+ \sum_{i=c+1}^{b+c}\frac{1}{b} & e = \{k,x\},\ k \in [c] \setminus \{x,y\},\ y < k\\
            =- 1 + b\frac{1}{b}=0=(b+c)w_e, & \\
            [\{k,x,y\}:\{x,y\}][\{k,x,y\}:\{k,x\}]+ \sum_{i=c+1}^{b+c}\frac{1}{b}[\{i,k,x\}:\{k,x\}] & e = \{k,x\},\ k \in [c] \setminus \{x,y\},\ x < k <y\\
            =-1 + b\left(\frac{1}{b}\right) =0=(b+c)w_e, & \\
            % \color{cyan}
            % [\{k,x,y\}:\{x,y\}][\{k,x,y\}:\{k,y\}]+ \sum_{i=c+1}^{b+c}\frac{1}{b}[\{i,k,y\}:\{k,y\}] & e = \{k,y\},\ k \in [c] \setminus \{x,y\},\ k < x\\
            % =-1 + b\frac{1}{b}=0=(b+c)w_e, & \\
            % \color{red}
            % [\{k,x,y\}:\{x,y\}][\{k,x,y\}:\{k,y\}]+ \sum_{i=c+1}^{b+c}\frac{1}{b}[\{i,k,y\}:\{k,y\}] & e = \{k,y\},\ k \in [c] \setminus \{x,y\},\ x < k < y\\
            % =-1 + b\frac{1}{b}=0=(b+c)w_e, & \\
            % % \color{blue}
            % [\{k,x,y\}:\{x,y\}][\{k,x,y\}:\{k,y\}]+ \sum_{i=c+1}^{b+c}\left(-\frac{1}{b}\right)[\{i,k,y\}:\{k,y\}] & e = \{k,y\},\ k \in [c] \setminus \{x,y\},\ y < k\\
            % =1 + b\left(-\frac{1}{b}\right)=0=(b+c)w_e, & \\
            % \left(\ul{1}\right)_{e,e}\left(-\frac{1}{b}\right)+\left(\ul{1}\right)_{e,\{x,y\}}1+\left(\ul{1}\right)_{e,\{i,y\}}\left(\frac{1}{b}\right) & e = \{i,x\},\ c+1 \leq i \leq b+c\\
            % = -\frac{c-1}{b}-1-\frac{1}{b}=-\frac{b+c}{b}=(b+c)w_{e}, & \\
            % \left(\ul{1}\right)_{e,e}\left(\frac{1}{b}\right)+\left(\ul{1}\right)_{e,\{x,y\}}1+\left(\ul{1}\right)_{e,\{i,x\}}\left(-\frac{1}{b}\right) & e = \{i,y\},\ c+1 \leq i \leq b+c\\
            % = \frac{c-1}{b}+1+\frac{1}{b}=\frac{b+c}{b}=(b+c)w_{e}, & \\
            % \left(\ul{1}\right)_{e,\{i,x\}}\left(-\frac{1}{b}\right)+\left(\ul{1}\right)_{e,\{i,y\}}\left( \frac{1}{b}\right) = \frac{1}{b}-\frac{1}{b} & e = \{i,z\},\ c+1 \leq i \leq b+c, \\
            % 0 = (b+c)w_e & z \in [c],\ z \ne x,y\\
            % 0, & e=\{z,w\},\ z,w \in [c] \setminus \{x,y\}.
        \end{dcases}
    \end{gather*}
    (continued)
    \begin{gather*}
        \begin{dcases}
            % \sum_{i=c+1}^{b+c}\left(1+\frac{2}{b}\right)[\{i,x,y\}:\{x,y\}] + \sum_{\substack{k=1,\\ k \ne x, y}}^c[\{k,x,y\}:\{x,y\}]^2& e=\{x,y\}\\
            % =\left(1+\frac{2}{b}\right)b+1 \cdot (c-2)=b+c=(b+c)w_e, &\\
            % [\{k,x,y\}:\{x,y\}][\{k,x,y\}:\{k,x\}]+ \sum_{i=c+1}^{b+c}\left(-\frac{1}{b}\right)[\{i,k,x\}:\{k,x\}] & e = \{k,x\},\ k \in [c] \setminus \{x,y\},\ k< x\\
            % =1 - b\frac{1}{b}=0=(b+c)w_e, & \\
            % [\{k,x,y\}:\{x,y\}][\{k,x,y\}:\{k,x\}]+ \sum_{i=c+1}^{b+c}\frac{1}{b} & e = \{k,x\},\ k \in [c] \setminus \{x,y\},\ y < k\\
            % =- 1 + b\frac{1}{b}=0=(b+c)w_e, & \\
            % [\{k,x,y\}:\{x,y\}][\{k,x,y\}:\{k,x\}]+ \sum_{i=c+1}^{b+c}\frac{1}{b}[\{i,k,x\}:\{k,x\}] & e = \{k,x\},\ k \in [c] \setminus \{x,y\},\ x < k <y\\
            % =-1 + b\left(\frac{1}{b}\right) =0=(b+c)w_e, & \\
            % % \color{cyan}
            [\{k,x,y\}:\{x,y\}][\{k,x,y\}:\{k,y\}]+ \sum_{i=c+1}^{b+c}\frac{1}{b}[\{i,k,y\}:\{k,y\}] & e = \{k,y\},\ k \in [c] \setminus \{x,y\},\ k < x\\
            =-1 + b\frac{1}{b}=0=(b+c)w_e, & \\
            % \color{red}
            [\{k,x,y\}:\{x,y\}][\{k,x,y\}:\{k,y\}]+ \sum_{i=c+1}^{b+c}\frac{1}{b}[\{i,k,y\}:\{k,y\}] & e = \{k,y\},\ k \in [c] \setminus \{x,y\},\ x < k < y\\
            =-1 + b\frac{1}{b}=0=(b+c)w_e, & \\
            % \color{blue}
            [\{k,x,y\}:\{x,y\}][\{k,x,y\}:\{k,y\}]+ \sum_{i=c+1}^{b+c}\left(-\frac{1}{b}\right)[\{i,k,y\}:\{k,y\}] & e = \{k,y\},\ k \in [c] \setminus \{x,y\},\ y < k\\
            =1 + b\left(-\frac{1}{b}\right)=0=(b+c)w_e, & \\
            \left(\ul{1}\right)_{e,e}\left(-\frac{1}{b}\right)+\left(\ul{1}\right)_{e,\{x,y\}}1+\left(\ul{1}\right)_{e,\{i,y\}}\left(\frac{1}{b}\right) & e = \{i,x\},\ c+1 \leq i \leq b+c\\
            = -\frac{c-1}{b}-1-\frac{1}{b}=-\frac{b+c}{b}=(b+c)w_{e}, & \\
            \left(\ul{1}\right)_{e,e}\left(\frac{1}{b}\right)+\left(\ul{1}\right)_{e,\{x,y\}}1+\left(\ul{1}\right)_{e,\{i,x\}}\left(-\frac{1}{b}\right) & e = \{i,y\},\ c+1 \leq i \leq b+c\\
            = \frac{c-1}{b}+1+\frac{1}{b}=\frac{b+c}{b}=(b+c)w_{e}, & \\
            \left(\ul{1}\right)_{e,\{i,x\}}\left(-\frac{1}{b}\right)+\left(\ul{1}\right)_{e,\{i,y\}}\left( \frac{1}{b}\right) = \frac{1}{b}-\frac{1}{b} & e = \{i,z\},\ c+1 \leq i \leq b+c, \\
            0 = (b+c)w_e & z \in [c],\ z \ne x,y\\
            0, & e=\{z,w\},\ z,w \in [c] \setminus \{x,y\}.
        \end{dcases}
    \end{gather*}
    Thus, $w=w_{x,y}$ is an eigenvector of $\ul{1}$ with eigenvalue $(b+c).$ Furthermore, the coordinate indexed by the edge $\{x,y\}$ is nonzero in $w_{x,y}$, while the same coordinate is $0$ in $w_{x',y'}$ for any $\{x',y'\} \ne \{x,y\}.$ Thus, the multiplicity of the eigenvalue $(b+c)$ of $\ul{1}$ is at least $\binom{c}{2}.$ This proves the proposition.
\end{proof}

\end{document}